\documentclass[10pt]{article}
\usepackage{amsmath,amsfonts,graphicx,subcaption,amsthm,amssymb,authblk,appendix,hyperref}
\hypersetup{hidelinks}
\usepackage[ruled, vlined]{algorithm2e}
\usepackage[labelfont=bf]{caption}
\usepackage[total={6in, 8in}]{geometry}

\theoremstyle{definition}
\newtheorem{definition}{Definition}

\theoremstyle{definition}
\newtheorem{example}{Example}

\theoremstyle{plain}
\newtheorem{theorem}{Theorem}

\theoremstyle{plain}
\newtheorem{lemma}{Lemma}

\theoremstyle{plain}

\theoremstyle{plain}
\newtheorem{proposition}{Proposition}

\usepackage[style=ieee, citestyle=numeric, dashed=false]{biblatex}
\addbibresource{references.bib}

\begin{document}

\title{Sparse Approximate Solutions to Max-Plus Equations with Application to Multivariate Convex Regression}

\author[1]{Nikos Tsilivis}
\author[2]{Anastasios Tsiamis}
\author[1]{Petros Maragos}
\affil[1]{School of ECE, National Technical University of Athens, Greece}
\affil[2]{ESE Department, SEAS, University of Pennsylvania, USA}
\maketitle

\begin{abstract}
    In this work, we study the problem of finding approximate, with minimum support set, solutions to matrix max-plus equations, which we call sparse approximate solutions. We show how one can obtain such solutions efficiently and in polynomial time for any $\ell_p$ approximation error. Based on these results, we propose a novel method for piecewise-linear fitting of convex multivariate functions, with optimality guarantees for the model parameters and an approximately minimum number of affine regions.
\end{abstract}

\section{Introduction}
In the last decades, the areas of signal and image processing had been greatly benefited from the advancement of the theory of sparse representations~\cite{Elad10}. Given a few linear measurements of an object of interest, sparse approximation theory provides efficient tools and algorithms for the acquisition of the sparsest (most zero elements) solution of the corresponding underdetermined linear system~\cite{Elad10,Nata95}. Based on the sparsity assumption of the initial signal, this allows perfect reconstruction from little data. Ideas stemming from this area had also given birth to \textit{compressed sensing} techniques~\cite{Dono06,CRT06} that allow accurate reconstructions from limited random projections of the initial signal, with wide-ranging applications in photography, magnetic resonance imaging and others.

Yet, there is a variety of problems in areas such as scheduling and synchronization~\cite{Cuni79,BCOQ01}, morphological image and signal analysis~\cite{Serr82,Heij94,Mara04} and optimization and optimal control~\cite{BCOQ01,AGG12,GMQ11} that do not admit linear representations. Instead, these problems share the ability to be described as a system of nonlinear equations, which involve maximum operations together with additions. The relevant theoretical framework has initially been developed in~\cite{Cuni79,BCOQ01,Butk10} and the appropriate algebra for this kind of problems is called \textit{max-plus} algebra. Motivated by the sparsity in the linear setting, \cite{TsMa19} introduced the notion of sparsity (signals with many $-\infty$ values, i.e. the identity element of this algebra) in max-plus algebra. Herein, we contribute to this theory, by studying the problem of sparse approximate solutions to matrix max-plus equations allowing the approximation error to be measured by any $l_p$ norm.

Subsequently, we apply our theoretical framework to the fundamental problem of multivariate convex regression, where the goal is to approximate a convex function by a piecewise-linear (PWL) one. Formulating the problem as a max-plus equation and computing a sparse solution enables us to obtain a PWL function with a \emph{minimum} number of affine regions.
In general,
the problem of fitting PWL functions has been studied before in many areas, including convex optimization, non-linear circuits, geometric programming, machine learning and statistics. Previous attempts on solving the multivariate version of it have focused on iterating between finding a suitable partition of the input space and locally fitting affine functions to each domain of the partition \cite{HKA16, MaBo09, KVY10, HaDu11}. A stable method is proposed in \cite{HaDu11}, where the authors propose a convex adaptive partitioning algorithm that is a consistent estimator and requires $\mathcal{O}(n(n+1)^2m\log(m)\log(\log(m)))$ computing time, where $n$ is the dimension of the input space and $m$ the number of points sampled from the convex function. Recently, it has been proposed to identify PWL functions with max-plus polynomials and formulate the regression problem as a max-plus equation, yielding a linear time algorithm \cite{MaTh20}. 

In summary, our contributions are the following: 
% \\\noindent 
a) We pose a \emph{generalized} problem of finding the sparsest approximate solution to max-plus equations under a constraint which makes the problem more tractable, also known as the ``lateness constraint". The approximation error is in terms of any $\ell_p$ norm, for $p<\infty$. This formulation is more general than~\cite{TsMa19}, where only the $\ell_1$ norm was considered.
%\\\noindent 
b) We prove that for any $\ell_p$, $p<\infty$, norm the problem has a supermodular structure, which allows us to solve it approximately but efficiently via a greedy algorithm, with a derived approximation ratio.
%\\\noindent 
c) We investigate the $\ell_{\infty}$ case without the ``lateness constraint", reveal its hardness and propose a heuristic method for solving it.
%\\\noindent 
d) We apply our framework to the problem of multivariate convex regression via PWL function fitting. Our method shares a common theoretical background with \cite{MaTh20}, but it differentiates from it as it allows an automatic, nearly optimal, selection of the affine regions, due to the imposed sparsity of the solutions. It, also, guarantees error bounds to the approximation, while compared to partitioning and locally fitting style methods \cite{HKA16, MaBo09, KVY10, HaDu11} it has lower complexity. \par

\section{Background Concepts}
For max and min operations we use the well-established lattice-theoretic symbols of $ \vee $ and $ \wedge $, respectively. We use roman letters for functions, signals and their arguments and greek letters mainly for operators. Also, boldface roman letters for vectors (lowcase) and matrices (capital). If $ \mathbf{M} = [m_{ij} ] $ is a matrix, its $(i,j)$-th element is also denoted as $ m_{ij} $ or as $ [\mathbf{M}]_{ij} $. Similarly, $ \mathbf{x} = [x_i] $ denotes a column vector, whose $i$-th element is denoted as $[\mathbf{x}]_i$ or simply $ x_i $. \par

\subsection{Max-plus algebra}
Max-plus arithmetic consists of the idempotent semiring $ (\mathbb{R}_\text{max}, \max, +) $, where $ \mathbb{R}_\text{max} = \mathbb{R} \cup \{-\infty\} $ is equipped with the standard maximum and sum operations, respectively.
\textit{Max-plus algebra} consists of vector operations that extend max-plus arithmetic to $\mathbb{R}_{\text{max}}^n$. They include the pointwise operations of partial ordering $ \mathbf{x} \leq \mathbf{y} $ and pointwise supremum $ \mathbf{x} \vee \mathbf{y} = [x_i \vee y_i] $, together with a class of vector transformations defined below. Max-plus algebra is isomorphic to the \textit{tropical algebra}, namely the min-plus semiring $ (\mathbb{R}_\text{min}, \min, +) $, $ \mathbb{R}_\text{min} = \mathbb{R} \cup \{\infty\} $ when extended to $\mathbb{R}_{\text{min}}^n$ in a similar fashion. Vector transformations on  $\mathbb{R}_{\text{max}}^n$ (resp. $\mathbb{R}_{\text{min}}^n$) that distribute over max-plus (resp. min-plus) vector superpositions can be represented as a max-plus $\boxplus$ (resp. min-plus $\boxplus^{'}$) product of a matrix $ \mathbf{A} \in \mathbb{R}_{\text{max}}^{m \times n} (\mathbb{R}_{\text{min}}^{m \times n}) $ with an input vector $ \mathbf{x} \in \mathbb{R}_{\text{max}}^n (\mathbb{R}_{\text{min}}^n) $:
\begin{equation}
    [\mathbf{A \boxplus x}]_{i} \triangleq \bigvee_{k = 1}^n a_{ik} + x_{k}, \; [\mathbf{A \boxplus^{'} x}]_{i} \triangleq \bigwedge_{k = 1}^n a_{ik} + x_{k}
\end{equation}
%
% Thus, $ \delta_{\mathbf{A}}(\mathbf{x}) \triangleq \mathbf{A \boxplus x} $ is a lattice dilation, and $ \epsilon_{\mathbf{A}}(\mathbf{x}) \triangleq \mathbf{A \boxplus^{'} x} $ is its adjoint lattice erosion. 
More details about general algebraic structures that obey those arithmetics can be found in \cite{Mara17}. In the case of a max-plus matrix equation $ \mathbf{A \boxplus x = b} $, there is a solution if and only if the vector 
\begin{equation}\label{eq:principal}\mathbf{\hat{x}} = (-\mathbf{A})^\intercal \boxplus^{'} \mathbf{b}
\end{equation}
satisfies it \cite{Cuni79,Butk10,Mara17}. We call this vector the \textit{principal solution} of the equation. It also satisfies the inequality $\mathbf{A} \boxplus \hat{\mathbf{x}} \leq \mathbf{b}$. Lastly, a vector $\mathbf{x} \in \mathbb{R}_{\text{max}}^n$ is called \textit{sparse} if it contains many $-\infty$ elements and we define its \textit{support set}, supp$(\mathbf{x})$, to be the set of positions where vector $\mathbf{x}$ has finite values, that is $ \text{supp}(\mathbf{x}) = \{i \mid x_i \neq -\infty\} $.

\subsection{Submodularity}
Let $U$ be a universe of elements. A set function $ f:2^U \to \mathbb{R} $ is called \textit{submodular}  \cite{Edmo70, Lova83} if $ \forall A \subseteq B \subseteq U, \; k \notin B $ holds:
\begin{equation}\label{eq:submod}
    f(A \cup \{k\}) - f(A) \geq f(B \cup \{k\}) - f(B).
\end{equation}
A set function $f$ is called \textit{supermodular} if $-f$ is submodular. Submodular functions occur as models of many real world evaluations in a number of fields and allow many hard combinatorial problems to be solved fast and with strong approximation guarantees \cite{KrGo14, Bach13}. It has been suggested that their importance in discrete optimization is similar to convex functions' in continuous optimization~\cite{Lova83}.\par
The following definition captures the idea of how far a given function is from being submodular and generalizes the notion of submodularity.
\begin{definition}\label{def:def43}\cite{DaKe18}
    Let $U$ be a set and $ f: 2^U \to \mathbb{R}^+ $ be an increasing, non-negative, function. The submodularity ratio of $f$ is
    \begin{equation}\label{eq:subratio}
        \gamma_{U, k}(f) \triangleq \min_{L \subseteq U, S: |S| \leq k, S \cap L = \emptyset}\frac{\sum_{x \in S}f(L \cup \{x\}) - f(L)}{f(L \cup S) - f(L)}
    \end{equation}
\end{definition}
%
%The previous definition generalizes the notion of submodularity, as the following proposition reveals.
%
\begin{proposition}\cite{DaKe18}
    An increasing function $ f: 2^U \to \mathbb{R} $ is submodular if and only if $\gamma_{U, k}(f) \geq 1, \; \forall \; U, k$. 
\end{proposition}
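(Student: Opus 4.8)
The plan is to unwind the condition $\gamma_{U,k}(f)\ge 1$ into the equivalent statement that, for every disjoint pair $L,S$ with $|S|\le k$,
\[
\sum_{x \in S}\bigl[f(L \cup \{x\}) - f(L)\bigr] \ge f(L \cup S) - f(L),
\]
the denominator in \eqref{eq:subratio} being nonnegative because $f$ is increasing. Read this way, the proposition is exactly the familiar equivalence between submodularity and the \emph{subadditivity of marginal gains}, and I would prove the two implications separately.

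For the forward direction, assuming $f$ submodular, I would fix $L$ and $S=\{x_1,\dots,x_s\}$ and telescope the joint marginal:
\[
f(L\cup S) - f(L) = \sum_{i=1}^{s}\Bigl[f(L\cup\{x_1,\dots,x_i\}) - f(L\cup\{x_1,\dots,x_{i-1}\})\Bigr].
\]
Since $L \subseteq L\cup\{x_1,\dots,x_{i-1}\}$ and $x_i$ lies outside the larger set, the diminishing-returns inequality \eqref{eq:submod} bounds each term from above by $f(L\cup\{x_i\}) - f(L)$. Summing recovers the displayed inequality, so every ratio in \eqref{eq:subratio} with positive denominator is $\ge 1$, and the minimum gives $\gamma_{U,k}(f)\ge 1$ for all $U,k$.

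For the converse, I would \emph{specialize} the hypothesis to its smallest nontrivial instance. Taking any $k\ge 2$ and a two-element set $S=\{i,j\}$ disjoint from $L=T$, the inequality collapses after cancellation to the local condition
\[
f(T\cup\{i\}) + f(T\cup\{j\}) \ge f(T\cup\{i,j\}) + f(T).
\]
It then remains to promote this pairwise statement to full submodularity. Given $A\subseteq B$ and $k\notin B$, I would enumerate $B\setminus A=\{y_1,\dots,y_m\}$, set $A_j=A\cup\{y_1,\dots,y_j\}$, and apply the local condition with $T=A_{j-1}$ to the elements $y_j$ and $k$, obtaining
\[
f(A_{j-1}\cup\{k\}) - f(A_{j-1}) \ge f(A_j\cup\{k\}) - f(A_j).
\]
Chaining these from $j=1$ to $m$ shows the marginal gain of $k$ is nonincreasing along $A=A_0\subseteq\cdots\subseteq A_m=B$, which is precisely \eqref{eq:submod}.

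I expect the main obstacle to be this local-to-global step: the ratio condition only becomes useful once $L$ and $S$ are chosen judiciously, and the insight is that $|S|=2$ plus the telescoping chain is enough, so no larger $S$ needs to be analyzed directly. A secondary technical point is the well-definedness of the ratio when a denominator vanishes; I would dispose of it by restricting attention to sets with $f(L\cup S)>f(L)$ (automatic if $f$ is strictly increasing), which leaves the inequality manipulations above unaffected.
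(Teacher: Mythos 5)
The paper states this proposition only as a citation to \cite{DaKe18} and supplies no proof of its own, so there is no in-paper argument to compare against; judged on its own terms, your proof is correct and complete. Both directions are the standard ones --- telescoping the joint marginal and applying \eqref{eq:submod} termwise to get $\gamma_{U,k}(f)\geq 1$, then specializing to $|S|=2$ and chaining the resulting local inequality along $B\setminus A$ for the converse --- and your handling of a vanishing denominator in \eqref{eq:subratio} disposes of the only degenerate case.
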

In \cite{DaKe18}, the authors used the submodularity ratio to analyze the properties of greedy algorithms in maximization problems subject to cardinality constraints and in minimum submodular cover problems, when the functions are only approximately submodular ($\gamma \in (0, 1)$). They proved that the performance of the algorithms degrade gradually as a function of $\gamma$, thus allowing guarantees for a wider variety of objective functions.    

\section{Sparse approximate solutions to max-plus equations}
     We consider the problem of finding the sparsest approximate solution to the max-plus matrix equation $ \mathbf{A \boxplus x=b}, \mathbf{A} \in \mathbb{R}^{m \times n}, \mathbf{b} \in \mathbb{R}^m $. Such a solution should i) have minimum support set $\text{supp}(\mathbf{x})$, and ii) have small enough approximation error  $\|\mathbf{b-A \boxplus x}\|_p^p$, for some $\ell_p, p < \infty$, norm. For this reason, given a prescribed constant $\epsilon$, we formulate the following optimization problem:
    \begin{equation}\label{eq:1.31}
        \begin{split}
            \displaystyle \text{arg} \min_{\mathbf{x} \in \mathbb{R}^n_{\text{max}}} & | \text{supp}(\mathbf{x}) | \\
            \text{s.t.} \; & \| \mathbf{b} - \mathbf{A} \boxplus \mathbf{x} \|_p^p \leq \epsilon, \; p < \infty, \\
            & \mathbf{A \boxplus x} \leq \mathbf{b}.
        \end{split}
    \end{equation}
    Note that we add an additional constraint $\mathbf{A \boxplus x \leq b}$, also known as the ``lateness" constraint. This constraint makes problem~\eqref{eq:1.31} more tractable; it enables the reformulation of problem~\eqref{eq:1.31} as a set optimization problem in~\eqref{eq:6}. In many applications this constraint is desirable--see~\cite{TsMa19}. However, in other situations, it might lead to less sparse solutions or higher residual error. A possible way to overcome this constraint is explored in section~\ref{ssec:SMMAE}. \par
    Even with the additional lateness constraint, problem (\ref{eq:1.31}) is very hard to solve. For example, when $ \epsilon = 0 $, solving~\eqref{eq:1.31} is an $\mathcal{NP}$-hard problem \cite{TsMa19}. Thus, we do not expect to find an efficient algorithm which solves (\ref{eq:1.31}) exactly. Instead, we will prove next there is a polynomial time algorithm which finds an approximate solution, by leveraging its supermodular properties. First, let us show that the above problem can be formed as a discrete optimization problem over a set. We follow a similar procedure to~\cite{TsMa19}, where the case $p = 1$ was examined. For the rest of this section, let $ J = \{1, \ldots, n\} $.
    
        \begin{lemma}\textbf{(Projection on the support set, $\ell_p$ case)}\label{lem:1}
        Let $T \subseteq J$, 
        \begin{equation}
            X_T = \{ \mathbf{x} \in \mathbb{R}^n_{{max}}: \text{supp}(\mathbf{x}) = T, \; \mathbf{A} \boxplus \mathbf{x} \leq \mathbf{b} \}.
        \end{equation}
        and $\mathbf{x}|_T$ be defined as $\hat{\mathbf{x}}$ inside $T$ and $-\infty$ otherwise, where $\hat{\mathbf{x}}$ is the principal solution defined in (\ref{eq:principal}). Then, it holds:
        \begin{itemize}
            \item $\mathbf{x}|_T \in X_T$.
            \item $\| \mathbf{b} - \mathbf{A} \boxplus \mathbf{x}|_T \|_p^p \leq \| \mathbf{b} - \mathbf{A} \boxplus \mathbf{x} \|_p^p \; \forall \; \mathbf{x} \in X_T$.
        \end{itemize}
    \end{lemma}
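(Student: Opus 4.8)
The plan is to build everything on a single structural fact about the principal solution: $\hat{\mathbf{x}}$ is the \emph{greatest} vector satisfying the lateness inequality $\mathbf{A} \boxplus \mathbf{y} \leq \mathbf{b}$. This is the standard characterization behind~\eqref{eq:principal}, since $\hat{x}_k = \bigwedge_{i}(b_i - a_{ik})$ is by construction the largest value that keeps $a_{ik} + \hat{x}_k \leq b_i$ for every row $i$. Because $\mathbf{A}$ and $\mathbf{b}$ have finite entries, each $\hat{x}_k$ is an infimum of finitely many reals and hence finite; this will immediately yield the support-set claim. The second ingredient is monotonicity of the map $\mathbf{y} \mapsto \mathbf{A} \boxplus \mathbf{y}$, which is a pointwise supremum of the increasing scalar maps $y_k \mapsto a_{ik} + y_k$.

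For the first bullet, I would first record that $\text{supp}(\mathbf{x}|_T) = T$: by the finiteness of $\hat{x}_k$ just noted, $[\mathbf{x}|_T]_k = \hat{x}_k \in \mathbb{R}$ exactly on $k \in T$ and equals $-\infty$ off $T$. For the lateness constraint I would compute $[\mathbf{A} \boxplus \mathbf{x}|_T]_i = \bigvee_{k \in T}(a_{ik} + \hat{x}_k) \leq \bigvee_{k=1}^{n}(a_{ik} + \hat{x}_k) = [\mathbf{A} \boxplus \hat{\mathbf{x}}]_i \leq b_i$, where the last step is precisely the property $\mathbf{A} \boxplus \hat{\mathbf{x}} \leq \mathbf{b}$ stated after~\eqref{eq:principal}. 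Hence $\mathbf{x}|_T \in X_T$.

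For the second bullet, the idea is a sandwich argument. Fix any $\mathbf{x} \in X_T$. Since $\mathbf{A} \boxplus \mathbf{x} \leq \mathbf{b}$, maximality of the principal solution gives $\mathbf{x} \leq \hat{\mathbf{x}}$ pointwise; combined with $\text{supp}(\mathbf{x}) = T$, so that $x_k = -\infty$ for $k \notin T$, this yields $\mathbf{x} \leq \mathbf{x}|_T$ coordinatewise. Applying monotonicity of $\boxplus$ together with the first bullet produces the chain $\mathbf{A} \boxplus \mathbf{x} \leq \mathbf{A} \boxplus \mathbf{x}|_T \leq \mathbf{b}$. Reading this componentwise, for each $i$ both residuals $b_i - [\mathbf{A} \boxplus \mathbf{x}]_i$ and $b_i - [\mathbf{A} \boxplus \mathbf{x}|_T]_i$ are nonnegative and the latter is the smaller; since $t \mapsto t^p$ is increasing on $[0,\infty)$, raising to the $p$-th power and summing over $i$ gives $\|\mathbf{b} - \mathbf{A} \boxplus \mathbf{x}|_T\|_p^p \leq \|\mathbf{b} - \mathbf{A} \boxplus \mathbf{x}\|_p^p$.

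I do not expect a serious obstacle: the content is essentially extremality of the principal solution plus monotonicity, and the generalization from the $\ell_1$ case of~\cite{TsMa19} to arbitrary $\ell_p$, $p < \infty$, is free once one observes that the lateness constraint forces every residual to be nonnegative, so componentwise domination is preserved under any increasing scalar transform $t \mapsto t^p$. The only point needing mild care is the support-set bookkeeping, namely confirming that $\hat{\mathbf{x}}$ has no infinite coordinate (guaranteed by finiteness of $\mathbf{A}$ and $\mathbf{b}$) so that $\text{supp}(\mathbf{x}|_T)$ equals $T$ exactly rather than a proper subset, which is what is needed for $\mathbf{x}|_T$ to genuinely belong to $X_T$.
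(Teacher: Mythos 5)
Your proposal is correct and follows essentially the same route as the paper: both arguments rest on $\mathbf{x}|_T \leq \hat{\mathbf{x}}$ together with $\mathbf{A}\boxplus\hat{\mathbf{x}}\leq\mathbf{b}$ for the first bullet, and on the maximality of the principal solution plus monotonicity of $\boxplus$ and of $t\mapsto t^p$ on $[0,\infty)$ for the second. Your added remark that finiteness of $\mathbf{A}$ and $\mathbf{b}$ guarantees $\mathrm{supp}(\mathbf{x}|_T)=T$ exactly is a small point the paper leaves implicit, but it does not change the argument.
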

    \begin{proof}
    \; \par
    \begin{itemize}
        \item 
        It suffices to show that $ \mathbf{A} \boxplus \mathbf{x}|_T \leq \mathbf{b} $. For $j \in T$ it is $[\mathbf{x}|_T]_j = \hat{x}_j$ and for $j \in J \setminus T, [\mathbf{x}|_T]_j = -\infty \leq \hat{x}_j$. Thus,
        \begin{equation}
            \mathbf{x}|_T \leq \mathbf{\hat{x}} \iff \mathbf{A} \boxplus \mathbf{x}|_T \leq \mathbf{A} \boxplus \mathbf{\hat{x}} \implies \mathbf{A} \boxplus \mathbf{x}|_T \leq \mathbf{b}.    
        \end{equation}
        Hence, $\mathbf{x}|_T \in X_T$.
        \item
        Let $\mathbf{x} \in X_T$, then $ \mathbf{A \boxplus x } \leq \mathbf{b} \iff \mathbf{x} \leq \mathbf{\hat{x}} $, which implies (since both $\mathbf{x, x}|_T$ have $-\infty$ values outside of $T$):
        \begin{equation}
            \mathbf{x} \leq \mathbf{x}|_T \iff \mathbf{b} - \mathbf{A} \boxplus \mathbf{x}|_T \leq \mathbf{b} - \mathbf{A} \boxplus \mathbf{x}.
        \end{equation}
        Hence:
        \begin{equation}
            \| \mathbf{b} - \mathbf{A} \boxplus \mathbf{x}|_T \|_p^p = \sum_{j \in T} (\mathbf{b} - \mathbf{A} \boxplus  \mathbf{x}|_T)_j^p \leq \sum_{j \in T} (\mathbf{b} - \mathbf{A} \boxplus \mathbf{x})_j^p = \| \mathbf{b} - \mathbf{A} \boxplus \mathbf{x} \|_p^p.
        \end{equation}
    \end{itemize}
    \end{proof}
    The previous lemma informs us that we can fix the finite values of a solution of Problem (\ref{eq:1.31}) to be equal to those of the principal solution $\hat{\mathbf{x}}$. Indeed,
    \begin{proposition}\label{prop:2}
        Let $\mathbf{x}_\text{OPT}$ be an optimal solution of (\ref{eq:1.31}), then we can construct a new one with values inside the support set equal to those of the principal solution $\hat{\mathbf{x}}$.
    \end{proposition}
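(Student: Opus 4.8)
The plan is to apply Lemma~\ref{lem:1} directly with the support set of the given optimum. Since that lemma already produces, for a \emph{fixed} support $T$, a vector supported on $T$ that is lateness-feasible and has the smallest possible residual among all members of $X_T$, the proposition should follow simply by choosing $T = \text{supp}(\mathbf{x}_\text{OPT})$ and checking that the resulting vector remains feasible for (\ref{eq:1.31}) and attains the same objective value.

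First I would set $T = \text{supp}(\mathbf{x}_\text{OPT})$ and observe that $\mathbf{x}_\text{OPT} \in X_T$: by definition its support is exactly $T$, and the lateness constraint $\mathbf{A} \boxplus \mathbf{x}_\text{OPT} \leq \mathbf{b}$ is precisely the second defining condition of $X_T$. I would then take as the candidate new solution the projection $\mathbf{x}|_T$ from Lemma~\ref{lem:1}, i.e. the principal solution $\hat{\mathbf{x}}$ restricted to $T$ and set to $-\infty$ elsewhere, whose finite entries are by construction those of $\hat{\mathbf{x}}$.

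Next I would verify that $\mathbf{x}|_T$ satisfies both constraints and matches the optimal objective. The first bullet of Lemma~\ref{lem:1} gives $\mathbf{x}|_T \in X_T$, hence the lateness constraint $\mathbf{A} \boxplus \mathbf{x}|_T \leq \mathbf{b}$ holds. The second bullet, applied with $\mathbf{x} = \mathbf{x}_\text{OPT} \in X_T$, gives $\|\mathbf{b} - \mathbf{A} \boxplus \mathbf{x}|_T\|_p^p \leq \|\mathbf{b} - \mathbf{A} \boxplus \mathbf{x}_\text{OPT}\|_p^p \leq \epsilon$, so the approximation constraint is preserved. Finally, membership $\mathbf{x}|_T \in X_T$ forces $\text{supp}(\mathbf{x}|_T) = T$, so $|\text{supp}(\mathbf{x}|_T)| = |T| = |\text{supp}(\mathbf{x}_\text{OPT})|$ equals the optimal cardinality; thus $\mathbf{x}|_T$ is itself optimal, with finite values equal to those of $\hat{\mathbf{x}}$, which is exactly the claim.

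The one point that deserves care—and really the only place the argument could slip—is ensuring the support does not shrink when we overwrite the entries with those of $\hat{\mathbf{x}}$; that is, we need $\hat{x}_j \neq -\infty$ for every $j \in T$. This is automatic: $\mathbf{x}_\text{OPT} \in X_T$ is equivalent to $\mathbf{x}_\text{OPT} \leq \hat{\mathbf{x}}$ (the equivalence $\mathbf{A} \boxplus \mathbf{x} \leq \mathbf{b} \iff \mathbf{x} \leq \hat{\mathbf{x}}$ already used in the proof of Lemma~\ref{lem:1}), so for each $j \in T$ the finite value $[\mathbf{x}_\text{OPT}]_j$ forces $\hat{x}_j \geq [\mathbf{x}_\text{OPT}]_j > -\infty$. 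Hence $\text{supp}(\mathbf{x}|_T) = T$ exactly and no index is dropped, which is precisely the content packaged inside Lemma~\ref{lem:1}.
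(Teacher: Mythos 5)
Your proof is correct and follows essentially the same route as the paper: both define the candidate by overwriting the finite entries of $\mathbf{x}_\text{OPT}$ with those of $\hat{\mathbf{x}}$ on $T = \text{supp}(\mathbf{x}_\text{OPT})$ and invoke Lemma~\ref{lem:1} to preserve feasibility and the residual bound. Your extra check that the support does not shrink (which also follows since $\mathbf{A} \in \mathbb{R}^{m\times n}$ and $\mathbf{b} \in \mathbb{R}^m$ have finite entries, so $\hat{\mathbf{x}}$ is finite everywhere) is a point the paper leaves implicit, but it does not change the argument.
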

    \begin{proof}
        Define
        \begin{equation}
            \mathbf{z} = 
                \begin{cases} 
                    \hat{x}_j, & j \in \text{supp}(\mathbf{x}_\text{OPT}) \\
                    -\infty, & \text{otherwise}
                \end{cases},
        \end{equation}
        then $\text{supp}(\mathbf{x}_\text{OPT}) = \text{supp}(\mathbf{z}) $ and, from Lemma \ref{lem:1}, $\| \mathbf{b} - \mathbf{A} \boxplus \mathbf{z} \|_p^p \leq \| \mathbf{b} - \mathbf{A} \boxplus \mathbf{x}_\text{OPT} \|_p^p$ and $\mathbf{A} \boxplus \mathbf{z} \leq \mathbf{b}$. Thus, $\mathbf{z}$ is also an optimal solution of (\ref{eq:1.31}). 
    \end{proof}
    Therefore, the only variable that matters in Problem (\ref{eq:1.31}) is the support set. To further clarify this, let us proceed with the following definitions:  
    \begin{definition}
        Let $ T \subseteq J$ be a candidate support and let $\mathbf{A}_j $ denote the $j$-th column of $\mathbf{A}$. The \emph{error vector} $ \mathbf{e}: 2^J \to \mathbb{R}^m $ is defined as:
        \begin{equation}
            \mathbf{e}(T) = 
                \begin{cases} 
                    \mathbf{b} - \bigvee_{j \in T}(\mathbf{A}_j + \hat{x}_j), & T \neq \emptyset \\
                    \bigvee_{j \in J} \mathbf{e}(\{j\}), & T = \emptyset.
                \end{cases}
        \end{equation}
        Observe that for any $T$, it holds $ \bigvee_{j \in T}(\mathbf{A}_j + \hat{x}_j) \leq \bigvee_{j \in J}(\mathbf{A}_j + \hat{x}_j) \leq \mathbf{b} $, which means that the above vector $ \mathbf{e}(T) = (e_1(T), e_2(T), \ldots, e_m(T))^\intercal $ is always non-negative.
        We also define the corresponding error function $ E_p: 2^J \to \mathbb{R} $ as:
        \begin{equation}
            E_p(T) = \| \mathbf{e}(T) \|_p^p = \sum_{i = 1}^{m}(e_i(T))^p.
        \end{equation}
    \end{definition}
    Problem (\ref{eq:1.31}) can now be written as:
    \begin{equation}\label{eq:6}
        \begin{split}
            \displaystyle \text{arg}\min_{T \subseteq J} & \; |T| \\
            \text{s.t.} & \; E_p(T) \leq \epsilon
        \end{split}
    \end{equation}
    The main results of this section are based on the following properties of $E_p$.
    \begin{theorem}
        Error function $E_p$ is decreasing and supermodular.
    \end{theorem}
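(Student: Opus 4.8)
The plan is to reduce the statement to a coordinatewise claim and then to a one-dimensional monotonicity fact. Since $E_p(T) = \sum_{i=1}^m (e_i(T))^p$ is a finite sum, and both monotonicity and supermodularity are preserved under nonnegative sums, it suffices to prove that for each fixed index $i \in \{1,\dots,m\}$ the scalar set function $T \mapsto (e_i(T))^p$ is decreasing and supermodular. Throughout I would abbreviate $c_{ij} = a_{ij} + \hat{x}_j$ and $m_i(T) = \bigvee_{j \in T} c_{ij}$, so that $e_i(T) = b_i - m_i(T)$; the definition guarantees $m_i(T) \le b_i$ and hence $e_i(T) \ge 0$, which is what lets me compose with $t \mapsto t^p$ safely.

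Monotonicity is the routine half. The map $T \mapsto m_i(T)$ is increasing (a larger index set can only raise the maximum), so $e_i$ is decreasing, and since $t \mapsto t^p$ is increasing on $[0,\infty)$ so is $(e_i)^p$; summing over $i$ shows $E_p$ is decreasing. For the empty set I would check separately that $e_i(\emptyset) = \bigvee_{j \in J} e_i(\{j\}) \ge e_i(\{j\})$ for every $j$, so $\emptyset$ carries the pointwise largest error, consistent with monotonicity.

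For supermodularity the key structural observation is that appending a single index $k$ acts as a truncation on the residual: $e_i(T \cup \{k\}) = \min\bigl(e_i(T),\, d_{ik}\bigr)$, where $d_{ik} = b_i - c_{ik} = e_i(\{k\})$. Fixing $A \subseteq B$ with $k \notin B$, monotonicity gives $e_i(A) \ge e_i(B) \ge 0$, and the supermodularity inequality $E_p(A) - E_p(A \cup \{k\}) \ge E_p(B) - E_p(B \cup \{k\})$ reduces, row by row, to $h(e_i(A)) \ge h(e_i(B))$ for the single-variable function $h(t) = t^p - (\min(t, d_{ik}))^p = \max\{0,\, t^p - (d_{ik})^p\}$ on $[0,\infty)$. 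Because $h$ vanishes on $[0, d_{ik}]$ and equals $t^p - (d_{ik})^p$ afterwards it is non-decreasing, so $e_i(A) \ge e_i(B)$ yields the desired inequality; summing over $i$ gives supermodularity of $E_p$.

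The step I expect to be the main obstacle is handling the exponent $p$ honestly: supermodularity is \emph{not} in general preserved under composition with a convex increasing function, so one cannot simply assert that $e_i$ is supermodular and conclude. The real content is the reformulation that turns the marginal error reduction into the truncation identity and isolates the scalar $h$, after which the whole claim collapses to the elementary monotonicity of $t \mapsto \max\{0,\, t^p - (d_{ik})^p\}$. Two bookkeeping points deserve attention: the truncation identity must be seen to survive at $A = \emptyset$ (it does, since $e_i(\emptyset) \ge d_{ik}$ forces $\min(e_i(\emptyset), d_{ik}) = d_{ik} = e_i(\{k\})$), and one should note that the argument uses only $e_i \ge 0$ and the monotonicity of $t \mapsto t^p$, so it remains valid for every finite $p$.
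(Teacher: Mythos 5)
Your proof is correct, and it takes a genuinely different route from the paper's. The paper establishes supermodularity indirectly, by computing the submodularity ratio $\gamma_{U,K}(-E_p)$ of Definition~\ref{def:def43} and showing it is at least $1$, which requires partitioning the row indices into classes ($I_1$, $I_2(s_k)$, $I_3(s_k)$) according to how the maximum $\bigvee_j (A_{ij}+\hat{x}_j)$ changes when elements are added. You instead verify the single-increment definition \eqref{eq:submod} directly, via the truncation identity $e_i(T\cup\{k\}) = \min\bigl(e_i(T),\, d_{ik}\bigr)$, which collapses the marginal decrease of $(e_i)^p$ to the scalar monotone function $t \mapsto \max\{0,\, t^p - d_{ik}^p\}$ evaluated at $e_i(T)$. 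This is shorter and cleaner: it avoids the case analysis (whose three index sets in the paper do not actually exhaust all rows, e.g.\ rows where $s_k$ alone does not raise the maximum but $S$ does, so the paper's argument is written somewhat loosely), it correctly flags that one cannot simply compose a supermodular $e_i$ with the convex increasing map $t\mapsto t^p$, and it makes transparent exactly where finiteness of $p$ enters --- the coordinatewise sum decomposition $E_p = \sum_i (e_i)^p$, which is precisely what fails for $E_\infty$ and explains the paper's later counterexample. What the paper's route buys in exchange is thematic consistency: the submodularity-ratio formalism is reused later to show that $E_\infty$ has ratio $0$, so proving the $\ell_p$ case in the same language sets up that contrast. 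Your checks of the boundary case $T=\emptyset$ (both for monotonicity and for the truncation identity, using $e_i(\emptyset)\ge d_{ik}$) are the right bookkeeping and are handled correctly.
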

    \begin{proof}
        Regarding the monotonicity, let $ \emptyset \neq C \subseteq B \subset J $, then 
        $$ \bigvee_{j \in C}(\mathbf{A}_j + \hat{x}_j) \leq \bigvee_{j \in B}(\mathbf{A}_j + \hat{x}_j) \iff \mathbf{e}(B) \leq \mathbf{e}(C), $$ thus raising the, non-negative, components of the two vectors to the $p$-th power and adding the inequalities together yields $ E_p(B) \leq E_p(C) $. The case for $ C = \emptyset $ easily follows from the definition of $\mathbf{e}$.\par
        We employ definition (\ref{def:def43}) to help us prove the supermodularity of the function.
        Let $ S, L \subseteq U \subseteq J $, with $ |S| \leq K, S \cap L = \emptyset $ and define $ f(U) = - E_p(U) $, $ \forall \; U $. Then:
        \begin{equation*}\label{eq:8}
            \begin{split}
                 \gamma_{U, K}(f)   & = \min_{L, S} \frac{\sum_{s_k \in S} f(L \cup \{s_k\}) - f(L)}{f(L \cup S) - f(L)} = \\
                                    & = \min_{L, S} \frac{\sum_{s_k \in S}\{ -\sum_{i=1}^m[b_i - \bigvee_{j \in L \cup \{s_k\}}(A_{ij} + \hat{x}_j)]^p + \sum_{i=1}^m[b_i - \bigvee_{j \in L}(A_{ij} + \hat{x}_j)]^p \}}{-\sum_{i=1}^m[b_i - \bigvee_{j \in L \cup S}(A_{ij} + \hat{x}_j)]^p + \sum_{i=1}^m[b_i - \bigvee_{j \in L}(A_{ij} + \hat{x}_j)]^p} = \\
                                    & = \min_{L, S} \frac{\sum_{s_k \in S} \sum_{i=1}^m - [b_i - \bigvee_{j \in L \cup \{s_k\}}(A_{ij} + \hat{x}_j)]^p + [b_i - \bigvee_{j \in L}(A_{ij} + \hat{x}_j)]^p}{\sum_{i=1}^m-[b_i - \bigvee_{j \in L \cup S}(A_{ij} + \hat{x}_j)]^p + [b_i - \bigvee_{j \in L}(A_{ij} + \hat{x}_j)]^p\}}.
            \end{split}
        \end{equation*}
        Let now $I_1$ be the set: 
        \begin{equation}
            I_1 = \left\{i \in \{1, 2, 
            \ldots, m\} \mid \bigvee_{j \in L \cup S}(A_{ij} + \hat{x}_j) = \bigvee_{j \in L}(A_{ij} + \hat{x}_j)\right\}
        \end{equation}
        and for each $ s_k \in S $, we define two sets of indices:
        \begin{equation}
            I_2(s_k) =  \left\{i \in \{1, 2, \ldots, m\} \mid \bigvee_{j \in L \cup \{s_k\}}(A_{ij} + \hat{x}_j) = \bigvee_{j \in L \cup S}(A_{ij} + \hat{x}_j) > \bigvee_{j \in L}(A_{ij} + \hat{x}_j)\right\}
        \end{equation}
        and:
        \begin{equation}
            I_3(s_k) = \left\{i \in \{1, 2, \ldots, m\} \mid \bigvee_{j \in L \cup S}(A_{ij} + \hat{x}_j) > \bigvee_{j \in L \cup \{s_k\}}(A_{ij} + \hat{x}_j)  > \bigvee_{j \in L}(A_{ij} + \hat{x}_j)\right\}.
        \end{equation}
        Then, if 
        \begin{equation}
            \Sigma_1 = \sum_{s_k \in S}\sum_{i \in I_1, I_2(s_k)} -[b_i - \bigvee_{j \in L \cup \{s_k\}}(A_{ij} + \hat{x}_j)]^p + [b_i - \bigvee_{j \in L}(A_{ij} + \hat{x}_j)]^p,
        \end{equation}
        the ratio becomes:
        \begin{equation*}
            \begin{split}
                \gamma_{U, K}(f) & = \min_{L, S} \frac{\Sigma_1 + \sum_{s_k \in S}\sum_{i \in I_3(s_k)} -[b_i - \bigvee_{j \in L \cup \{s_k\}}(A_{ij} + \hat{x}_j)]^p + [b_i - \bigvee_{j \in L}(A_{ij} + \hat{x}_j)]^p}{\Sigma_1}\\
                & \geq 1, \; \forall \; U, K.
            \end{split}
        \end{equation*}
        meaning that $f$ is submodular or, equivalently, $E_p = -f$ is supermodular.
    \end{proof}
    \begin{algorithm}
    \SetAlgoLined
    \SetKwInOut{Input}{Input}
    \Input{$\mathbf{A, b}$}
    
     Compute $ \mathbf{\hat{x}} = (-\mathbf{A})^\intercal \boxplus^{'} \mathbf{b} $\\
     \uIf{$E_p(J) > \epsilon$}{\textbf{return} Infeasible}
     Set $ T_0 = \emptyset, k = 0 $\\
     \While{$ E_p(T_k) > \epsilon $}{
      $ j = \text{arg}\min_{s \in J \setminus T_k}E_p(T_k \cup \{s\}) $\\
      $ T_{k+1} = T_k \cup \{j\} $\\
      $ k = k + 1 $
     }
     $ x_j = \hat{x}_j, j \in T_k $ and $ x_j = -\infty $, otherwise\\
     \textbf{return} $ \mathbf{x}, T_k $
     \caption{Approximate solution of problem (\ref{eq:1.31})}
     \label{alg:greedy}
    \end{algorithm}
    Setting $ \tilde{E}_p(T) = \max(E_p(T), \epsilon) $ \footnote{The new, truncated, error function remains supermodular; see \cite{KrGo14}.} and leveraging the previous theorem, we are able to formulate problem (\ref{eq:6}), and thus the initial one (\ref{eq:1.31}), as a cardinality minimization problem subject to a supermodular equality constraint \cite{Wols82}, which allows us to approximately solve it by the greedy Algorithm \ref{alg:greedy}. The calculation of the principal solution requires $\mathcal{O}(nm)$ time and the greedy selection of the support set of the solution costs $\mathcal{O}(n^2)$ time. We call the solutions of problem (\ref{eq:1.31}) \textit{Sparse Greatest Lower Estimates (SGLE)} of $\mathbf{b}$. Regarding the approximation ratio between the optimal solution and the output of Algorithm \ref{alg:greedy}, the following proposition holds.
   \begin{proposition}\label{prop:ratio}
        Let $\mathbf{x}$ be the output of Algorithm \ref{alg:greedy} after $k > 0$ iterations of the inner while loop and $T_k$ the respective support set. Then, if $T^*$ is the support set of the optimal solution of (\ref{eq:1.31}), the following inequality holds:
        \begin{equation}
            \frac{|T_k|}{|T^*|} \leq 1 + \log\left(\frac{m\Delta^p - \epsilon}{E_p(T_{k-1}) - \epsilon}\right),    
        \end{equation}
        where $ \Delta = \bigvee_{i, j}(b_i - A_{ij} - \hat{x}_j) $.
    \end{proposition}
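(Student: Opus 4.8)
The plan is to recognize \eqref{eq:6} as an instance of submodular set cover and to adapt the classical Wolsey-style greedy analysis to the (exactly) supermodular objective $E_p$. Write $\rho_k \triangleq E_p(T_k) - \epsilon$ for the residual above threshold; since the greedy loop adds exactly one index per iteration we have $|T_k| = k$, so it suffices to bound $k/|T^*|$. The argument rests on a single per-step improvement inequality together with an unrolling of the resulting geometric recursion.

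First I would establish the key improvement lemma: at iteration $k$ the greedy step decreases $E_p$ by at least $\rho_{k-1}/|T^*|$. To see this, note the algorithm chooses $j$ minimizing $E_p(T_{k-1}\cup\{s\})$, hence it attains $\max_{s}\,[E_p(T_{k-1}) - E_p(T_{k-1}\cup\{s\})]$, which dominates the average of these marginal decreases over $s \in T^*$. Because $-E_p$ is submodular its submodularity ratio satisfies $\gamma \geq 1$, so this average is at least $\tfrac{1}{|T^*|}\big(E_p(T_{k-1}) - E_p(T_{k-1}\cup T^*)\big)$; finally monotonicity of $E_p$ and feasibility of the optimum give $E_p(T_{k-1}\cup T^*) \leq E_p(T^*) \leq \epsilon$. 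Chaining these yields $E_p(T_{k-1}) - E_p(T_k) \geq \rho_{k-1}/|T^*|$, i.e. $\rho_k \leq \rho_{k-1}\big(1 - \tfrac{1}{|T^*|}\big)$.

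Next I would unroll the recursion. Iterating gives $\rho_{k-1} \leq \rho_0\big(1-\tfrac{1}{|T^*|}\big)^{k-1} \leq \rho_0\, e^{-(k-1)/|T^*|}$ via $1-x \leq e^{-x}$, so that $(k-1)/|T^*| \leq \log(\rho_0/\rho_{k-1})$ and therefore $k/|T^*| \leq 1 + \log(\rho_0/\rho_{k-1})$ using $|T^*| \geq 1$. It remains to bound the initial residual: since each component $e_i(\emptyset)$ lies in $[0,\Delta]$ with $\Delta = \bigvee_{i,j}(b_i - A_{ij} - \hat x_j)$, we get $E_p(\emptyset) = \sum_i e_i(\emptyset)^p \leq m\Delta^p$, hence $\rho_0 \leq m\Delta^p - \epsilon$. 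Substituting $\rho_0$ and $\rho_{k-1} = E_p(T_{k-1}) - \epsilon$ gives the stated bound.

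The main obstacle is the improvement lemma, and specifically the bookkeeping that makes the averaging step valid even when individual greedy steps overshoot below $\epsilon$. The cleanest safeguard is exactly the truncation $\tilde E_p = \max(E_p,\epsilon)$ flagged in the footnote, which keeps every residual nonnegative and preserves supermodularity; but since the recursion above uses $E_p$-decreases only as \emph{lower} bounds, the inequality $\rho_k \leq \rho_{k-1}(1-1/|T^*|)$ survives an overshoot (it merely drives $\rho_k$ negative, coinciding with loop termination). I would double-check that the monotonicity and supermodularity invoked are precisely those proved in the preceding theorem, so that no extra structural hypothesis on $\mathbf A,\mathbf b$ is needed.
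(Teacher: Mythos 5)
Your proof is correct, and it reaches the stated bound by a genuinely more self-contained route than the paper. The paper simply invokes Wolsey's theorem for cardinality minimization under a decreasing supermodular cover constraint, applies it to the truncated function $\tilde E_p = \max(E_p,\epsilon)$, and then evaluates the three endpoints $\tilde E_p(\emptyset)\leq m\Delta^p$, $\tilde E_p(J)=\epsilon$, $\tilde E_p(T_{k-1})>\epsilon$; you instead re-derive that theorem from scratch via the per-iteration improvement inequality $\rho_k \leq \rho_{k-1}(1-1/|T^*|)$ and the unrolled geometric recursion. The two arguments are mathematically the same underneath (your improvement lemma is exactly the engine of Wolsey's proof), but yours makes explicit where each hypothesis enters: supermodularity of $E_p$ (equivalently $\gamma\geq 1$ for $-E_p$) in the averaging step, monotonicity plus feasibility of $T^*$ to bound $E_p(T_{k-1}\cup T^*)\leq E_p(T^*)\leq\epsilon$, and nonnegativity of $\mathbf e(\emptyset)$ with $e_i(\emptyset)\leq\Delta$ for the initial residual. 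That transparency is a real gain, and your observation that the recursion only uses the $E_p$-decreases as lower bounds --- so an overshoot below $\epsilon$ is harmless --- correctly explains why the truncation $\tilde E_p$ is a convenience rather than a necessity here. Two small points to tighten: the averaging should formally be over $S = T^*\setminus T_{k-1}$ (so that $S\cap L=\emptyset$ as Definition~\ref{def:def43} requires and so that every candidate lies in the greedy search set $J\setminus T_{k-1}$); since $|S|\leq|T^*|$ and the discarded marginal decreases are zero, the bound $\rho_{k-1}/|T^*|$ survives unchanged. And you should note that $T^*$ exists (i.e.\ the problem is feasible) precisely because the algorithm has already checked $E_p(J)\leq\epsilon$, and that $\rho_{k-1}>0$ because the loop condition held at iteration $k$, so the logarithm is well defined.
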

    \begin{proof}
        From \cite{Wols82}, the following bound holds for the cardinality minimization problem subject to a supermodular and decreasing constraint, defined as function $f: 2^J \to \mathbb{R} $, by the greedy algorithm:
        \begin{equation}
            \frac{|T_k|}{|T^*|} \leq 1 + \log\left(\frac{f(\emptyset) - f(J)}{f(T_{k-1}) - f(J)}\right)
        \end{equation}
        For our problem, it is $f = \tilde{E}_p$. Observe now that, since $ k > 0 $, $ \tilde{E}_p(\emptyset) = E_p(\emptyset) \leq m\Delta^p $, $ 0 \leq \tilde{E}_p(J) = \epsilon $ and $ \tilde{E}_p(T_{k-1}) > \epsilon $. Therefore, the result follows. 
    \end{proof}
    The ratio warn us to expect less optimal and, thus, less sparse vectors when increasing the norm $p$ that we use to measure the approximation. It also hints towards an inapproximability result when $p \to \infty$, which is formalised next.
    
    \subsection{Sparse vectors with minimum \texorpdfstring{$\ell_\infty$}{l-infinity} errors}
    \label{ssec:SMMAE}
    %In this subsection, we discuss a way to go around the lateness constraint  $\mathbf{A \boxplus x} \leq \mathbf{b} $. 
    Although in some settings the $\mathbf{A \boxplus x} \leq \mathbf{b} $ constraint is needed~\cite{TsMa19}, in other cases it could disqualify potentially sparsest vectors from consideration. Omitting the constraint, on the other hand, makes it unclear how to search for minimum error solutions for any $\ell_p \; (p < \infty$) norm. For instance, it has recently been reported that it is $\mathcal{NP}$-hard to determine if a given point is a local minimum for the $\ell_2$ norm \cite{Hook19}. For that reason, we shift our attention to the case of $ p = \infty $. It is well known \cite{Cuni79, Butk10} that problem $ \min_{\mathbf{x} \in \mathbb{R}^n_{\text{max}}} \| \mathbf{b} - \mathbf{A} \boxplus \mathbf{x} \|_\infty $ has a closed form solution; it can be calculated in $ \mathcal{O}(nm) $ time by adding to the principal solution element-wise the half of its $\ell_\infty$ error. Note that this new vector does not necessarily satisfy $ \mathbf{A \boxplus x} \leq \mathbf{b} $, so it shows a way to overcome the aforementioned limitation. \par
    First, let us demonstrate that problem (\ref{eq:1.31}), when considering the $\ell_\infty$ norm, becomes harder than before and non-approximable by the greedy Algorithm \ref{alg:greedy}. Hence, consider now the following optimization problem:
    \begin{equation}\label{eq:linf_problem}
        \begin{split}
            \displaystyle \text{arg} \min_{\mathbf{x} \in \mathbb{R}^n_{\text{max}}} & | \text{supp}(\mathbf{x}) | \\
            \text{s.t.} \; & \| \mathbf{b} - \mathbf{A} \boxplus \mathbf{x} \|_\infty \leq \epsilon.
        \end{split}
    \end{equation}
    Thanks to a similar construction as in the previous section, this problem can be recast as a set-search problem.
    \begin{lemma}\textbf{(Projection on the support set, $\ell_\infty$ case)}\label{lem:2}
        Let $T \subseteq J$, $\mathbf{x}|_T$ defined as $\hat{\mathbf{x}}$ inside $T$ and $-\infty$ otherwise and $ \mathbf{x}^* = \mathbf{x}|_T + \frac{\| \mathbf{b} - \mathbf{A} \boxplus \mathbf{x}|_T \|_\infty}{2} $. Then $ \forall \; \mathbf{z} \in \mathbb{R}^n_\text{max} $ with $ \text{supp}(\mathbf{z}) = T $, it holds:
        \begin{equation}
            \| \mathbf{b} - \mathbf{A} \boxplus \mathbf{z} \|_\infty \geq \| \mathbf{b} - \mathbf{A} \boxplus \mathbf{x^*}  \|_\infty = \frac{\| \mathbf{b} - \mathbf{A} \boxplus \mathbf{x}|_T \|_\infty}{2}.
        \end{equation}
    \end{lemma}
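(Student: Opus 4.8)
The plan is to split the statement into its two assertions---the equality $\|\mathbf{b} - \mathbf{A}\boxplus\mathbf{x}^*\|_\infty = \frac{1}{2}\|\mathbf{b} - \mathbf{A}\boxplus\mathbf{x}|_T\|_\infty$ and the lower bound stating that no $\mathbf{z}$ supported on $T$ can beat $\mathbf{x}^*$---and to treat them separately. Throughout I would write $\delta = \|\mathbf{b} - \mathbf{A}\boxplus\mathbf{x}|_T\|_\infty$ and lean on one structural fact: the max-plus action commutes with scalar shifts, i.e. $\mathbf{A}\boxplus(\mathbf{v}+c) = (\mathbf{A}\boxplus\mathbf{v}) + c$ for any finite $c$, which is immediate from the definition of $\boxplus$ (and keeps $-\infty$ entries at $-\infty$, so supports are preserved).

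For the equality, I first recall from Lemma \ref{lem:1} that $\mathbf{x}|_T \in X_T$, so the residual $\mathbf{r} = \mathbf{b} - \mathbf{A}\boxplus\mathbf{x}|_T$ is nonnegative, with $0 \leq r_i \leq \delta$ for every $i$ and $\max_i r_i = \delta$. Applying the shift identity with $c = \delta/2$ gives $\mathbf{A}\boxplus\mathbf{x}^* = (\mathbf{A}\boxplus\mathbf{x}|_T) + \delta/2$, hence $\mathbf{b} - \mathbf{A}\boxplus\mathbf{x}^* = \mathbf{r} - \delta/2$, whose entries lie in $[-\delta/2,\delta/2]$. Since $r_i \in [0,\delta]$ we have $|r_i - \delta/2| \leq \delta/2$, with equality attained at the index where $r_i = \delta$; therefore $\|\mathbf{b}-\mathbf{A}\boxplus\mathbf{x}^*\|_\infty = \delta/2$, as claimed.

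For the lower bound, I fix an arbitrary $\mathbf{z}$ with $\text{supp}(\mathbf{z}) = T$ and set $\lambda = \|\mathbf{b} - \mathbf{A}\boxplus\mathbf{z}\|_\infty$. The two componentwise consequences $\mathbf{A}\boxplus\mathbf{z} \leq \mathbf{b} + \lambda$ and $\mathbf{A}\boxplus\mathbf{z} \geq \mathbf{b} - \lambda$ are the facts I would exploit. Shifting the first by $-\lambda$ gives $\mathbf{A}\boxplus(\mathbf{z}-\lambda) \leq \mathbf{b}$, so $\mathbf{z}-\lambda$ is a subsolution supported on $T$. The crux is that $\mathbf{x}|_T$ is the \emph{greatest} subsolution among all vectors supported on $T$, whence $\mathbf{z}-\lambda \leq \mathbf{x}|_T$, i.e. $\mathbf{z} \leq \mathbf{x}|_T + \lambda$; by monotonicity of $\boxplus$ and the shift identity this yields $\mathbf{A}\boxplus\mathbf{z} \leq (\mathbf{A}\boxplus\mathbf{x}|_T) + \lambda$. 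Combining with $\mathbf{b} - \lambda \leq \mathbf{A}\boxplus\mathbf{z}$ gives $\mathbf{b} - \mathbf{A}\boxplus\mathbf{x}|_T \leq 2\lambda$ componentwise, and taking the $\ell_\infty$ norm delivers $\delta \leq 2\lambda$, i.e. $\lambda \geq \delta/2$.

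The single subtle point---the step I expect to be the main obstacle to state cleanly---is the maximality claim that $\mathbf{x}|_T$ is the largest subsolution with support contained in $T$. It rests on the observation that the coordinatewise formula $\hat{x}_j = \bigwedge_i (b_i - a_{ij})$ for the principal solution is column-local, so the principal solution of the reduced system retaining only the columns in $T$ coincides with $\hat{\mathbf{x}}$ truncated to $T$, namely $\mathbf{x}|_T$; the maximality of the principal solution (every subsolution is dominated by it) then transfers verbatim to the reduced system. Once this is spelled out, the sandwiching argument closes at once. Equivalently, one may simply invoke the classical closed-form $\ell_\infty$ (Chebyshev-type) result for max-plus systems, applied to the submatrix $\mathbf{A}_T$.
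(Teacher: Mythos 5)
Your proof is correct, and it is essentially a self-contained expansion of what the paper only sketches. The paper's argument restricts the system to the columns indexed by $T$, observes (via the column-local formula $\hat{x}_j = \bigwedge_i (b_i - a_{ij})$, i.e.\ your ``subtle point'') that the principal solution of the reduced system is exactly $\mathbf{x}|_T$ restricted to $T$, and then simply cites the classical closed-form result that the minimum-$\ell_\infty$ solution of a max-plus system is the principal solution shifted up by half its $\ell_\infty$ error. You instead reprove that classical fact from scratch: the shift identity $\mathbf{A}\boxplus(\mathbf{v}+c) = (\mathbf{A}\boxplus\mathbf{v})+c$ gives the equality $\|\mathbf{b}-\mathbf{A}\boxplus\mathbf{x}^*\|_\infty = \delta/2$ directly from the residual lying in $[0,\delta]$ with the value $\delta$ attained, and the sandwiching argument ($\mathbf{z}-\lambda$ is a subsolution supported in $T$, hence dominated by the greatest such subsolution $\mathbf{x}|_T$, hence $\mathbf{b}-\mathbf{A}\boxplus\mathbf{x}|_T \le 2\lambda$) delivers the lower bound. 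The identification of $\mathbf{x}|_T$ as the greatest subsolution with support contained in $T$ is exactly the content of Lemma~\ref{lem:1} (first bullet plus the ordering argument in the second), so your invocation of it is legitimate. What your version buys is a proof that does not lean on the external citation and makes explicit where the factor of $2$ comes from; what the paper's version buys is brevity. No gaps.
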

    \begin{proof}(Sketch)
        By fixing the support set of the considered vectors equal to $T$, equivalently we omit the columns and indices of $\mathbf{A}$ and $\mathbf{x}$, respectively, that do not belong in $T$ (since they will not be considered at the evaluation of the maximum). By doing so, we get a new equation with same vector $\mathbf{b}$ and restricted $\mathbf{A, x}$. The vector $\mathbf{x}^*$ that minimizes the $\ell_\infty$ error of this equation is obtained from its principal solution plus the half of its $\ell_\infty$ error. But now observe that the new principal solution shares the same values with the original principal solution (follows from Lemma \ref{lem:1}) inside $T$, which is exactly vector $\mathbf{x}|_T$. Extending $\mathbf{x}^*$ back to $\mathbb{R}^n_\text{max}$ yields the result.
    \end{proof}
    So, a similar result to Proposition \ref{prop:2} holds.
    \begin{proposition}
        Let $\mathbf{x}_\text{OPT}$ be an optimal solution of (\ref{eq:linf_problem}), then we can construct a new one with values inside the support set equal to those of the principal solution $\hat{\mathbf{x}}$ plus the half of its $\ell_\infty$ error.
    \end{proposition}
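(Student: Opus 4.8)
The plan is to mirror the argument of Proposition~\ref{prop:2}, replacing the role played there by Lemma~\ref{lem:1} with the corresponding $\ell_\infty$ statement of Lemma~\ref{lem:2}. The guiding principle is the same as in the $\ell_p$ case: the support set is the only quantity that genuinely matters, and once the support is fixed the $\ell_\infty$-optimal assignment of the finite values is pinned down explicitly, this time by Lemma~\ref{lem:2}.

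First I would fix $T = \text{supp}(\mathbf{x}_\text{OPT})$ and form $\mathbf{x}|_T$ (equal to $\hat{\mathbf{x}}$ on $T$ and $-\infty$ elsewhere) together with the shifted vector $\mathbf{x}^* = \mathbf{x}|_T + \tfrac{1}{2}\|\mathbf{b} - \mathbf{A}\boxplus\mathbf{x}|_T\|_\infty$, exactly as in the statement of Lemma~\ref{lem:2}. Since $\mathbf{A}$ and $\mathbf{b}$ are real, the principal solution $\hat{x}_j = \min_i(b_i - a_{ij})$ is finite in every coordinate, so the scalar $\tfrac{1}{2}\|\mathbf{b}-\mathbf{A}\boxplus\mathbf{x}|_T\|_\infty$ is a finite, nonnegative number. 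Adding it to $\mathbf{x}|_T$ therefore leaves the $-\infty$ entries untouched and keeps the finite entries finite, giving $\text{supp}(\mathbf{x}^*) = T = \text{supp}(\mathbf{x}_\text{OPT})$, and in particular $|\text{supp}(\mathbf{x}^*)| = |\text{supp}(\mathbf{x}_\text{OPT})|$.

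Next I would establish feasibility of $\mathbf{x}^*$. Because $\mathbf{x}_\text{OPT}$ itself has support exactly $T$, Lemma~\ref{lem:2} applied with $\mathbf{z} = \mathbf{x}_\text{OPT}$ yields
\begin{equation}
    \|\mathbf{b} - \mathbf{A}\boxplus\mathbf{x}^*\|_\infty = \frac{\|\mathbf{b} - \mathbf{A}\boxplus\mathbf{x}|_T\|_\infty}{2} \leq \|\mathbf{b} - \mathbf{A}\boxplus\mathbf{x}_\text{OPT}\|_\infty \leq \epsilon,
\end{equation}
the last inequality holding since $\mathbf{x}_\text{OPT}$ is feasible for~\eqref{eq:linf_problem}. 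Thus $\mathbf{x}^*$ satisfies the single constraint of~\eqref{eq:linf_problem} while attaining the same minimum support cardinality as $\mathbf{x}_\text{OPT}$, so it is itself an optimal solution of the prescribed form, completing the construction.

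The only point requiring care is the degenerate support $T = \emptyset$: there $\mathbf{A}\boxplus\mathbf{x}|_T$ has all entries equal to $-\infty$, forcing an infinite $\ell_\infty$ residual, so no such vector can satisfy the constraint for a finite $\epsilon$. Hence any feasible—and in particular any optimal—solution of~\eqref{eq:linf_problem} has nonempty support, Lemma~\ref{lem:2} applies verbatim, and the construction above goes through. I expect everything else to be a routine transcription of Proposition~\ref{prop:2}, with the halving of the error furnished by Lemma~\ref{lem:2} playing the role of the monotone error decrease supplied by Lemma~\ref{lem:1}.
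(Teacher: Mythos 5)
Your proof is correct and is exactly the argument the paper intends: the paper states this proposition without proof, remarking only that it follows from Lemma~\ref{lem:2} in the same way Proposition~\ref{prop:2} follows from Lemma~\ref{lem:1}, and your write-up is a faithful instantiation of that, applying Lemma~\ref{lem:2} with $\mathbf{z}=\mathbf{x}_\text{OPT}$ to get feasibility of $\mathbf{x}^*$ at the same support cardinality. Your additional observations (the finite scalar shift preserves the support, and the empty-support case is infeasible for finite $\epsilon$) are correct and only make the argument more complete than the paper's.
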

    By defining $ E_\infty(T) = \frac{\| \mathbf{b} - \mathbf{A} \boxplus \mathbf{x}|_T \|_\infty}{2} $, (\ref{eq:linf_problem}) becomes:
    \begin{equation}\label{eq:infty_set}
        \begin{split}
            \displaystyle \text{arg}\min_{T \subseteq J} & \; |T| \\
            \text{s.t.} & \; E_\infty(T) \leq \epsilon
        \end{split}
    \end{equation}
    Unfortunately this problem does not admit an approximate solution by the greedy Algorithm \ref{alg:greedy} (to be precise, the modified version of Algorithm \ref{alg:greedy} when $E_p$ becomes $E_\infty$), as its error function, although decreasing, is not supermodular. The following example also reveals that the submodularity ratio (\ref{eq:subratio}) of $E_\infty$ is $0$. Therefore, it is not even approximately supermodular and a solution by Algorithm \ref{alg:greedy} can be arbitrarily bad~\cite{DaKe18}.
    \begin{example}
        Let $A = 
        \begin{pmatrix}
            0 & 5 & 2 \\
            4 & 1 & 0 \\
            0 & 1 & 0
        \end{pmatrix}, \mathbf{b} = 
        \begin{pmatrix}
            3 \\ 1 \\ 0
        \end{pmatrix}$, then principal solution $\hat{\mathbf{x}}$ is: 
        $$\hat{\mathbf{x}} = 
        \begin{pmatrix}
            0 & -4 & 0 \\
            -5 & -1 & -1 \\
            -2 & 0 & 0
        \end{pmatrix} \boxplus' 
        \begin{pmatrix}
            3 \\ 1 \\ 0
        \end{pmatrix} = 
        \begin{pmatrix}
            -3 \\ -2 \\ 0
        \end{pmatrix}.$$ We calculate now the error function on different sets:
        \begin{itemize}
            \item When $T = \{3\}$, then  $\hat{\mathbf{x}}|_{\{3\}} = \begin{pmatrix} -\infty, -\infty, 0 \end{pmatrix}^\intercal $ and \newline $ E_\infty(\{3\}) = \frac{1}{2} \| \mathbf{b} - \bigvee_{j \in \{3\}}(\mathbf{A}_j + \hat{x}|_{\{3\},j}) \|_\infty = \frac{1}{2} \|
                \begin{pmatrix}
                    3 \\ 1 \\ 0
                \end{pmatrix} - 
                \begin{pmatrix}
                    2 \\ 0 \\ 0
                \end{pmatrix} \|_\infty = \frac{1}{2} $.
            \item Likewise, when $T = \{1, 3\}$, $ E_\infty(\{1, 3\}) = \frac{1}{2} \|
                \begin{pmatrix}
                    3 \\ 1 \\ 0
                \end{pmatrix} -
                \begin{pmatrix}
                    -3 \\ 1 \\ -3
                \end{pmatrix}
                \bigvee
                \begin{pmatrix}
                    2 \\ 0 \\ 0
                \end{pmatrix} \|_\infty = \frac{1}{2} $.
            \item $T = \{2, 3\}$, $ E_\infty(\{2, 3\}) = \frac{1}{2} \| 
                \begin{pmatrix}
                    3 \\ 1 \\ 0
                \end{pmatrix} -
                \begin{pmatrix}
                    3 \\ -1 \\ -1
                \end{pmatrix}
                \bigvee
                \begin{pmatrix}
                    2 \\ 0 \\ 0
                \end{pmatrix} \|_\infty = \frac{1}{2} $.
            \item $T = \{1, 2, 3\}$, $ E_\infty(\{1, 2, 3\}) = \frac{1}{2} \|
                \begin{pmatrix}
                    3 \\ 1 \\ 0
                \end{pmatrix} - 
                \begin{pmatrix}
                    -3 \\ 1 \\ -3
                \end{pmatrix}
                \bigvee
                \begin{pmatrix}
                    3 \\ -1 \\ -1
                \end{pmatrix}
                \bigvee
                \begin{pmatrix}
                    2 \\ 0 \\ 0
                \end{pmatrix} 
                \|_\infty = 0$.
        \end{itemize}
        Let now $f = - E_\infty, L = \{3\}, S = \{1, 2\} $, then, by (\ref{eq:subratio}), we have:
        \begin{equation}
            \frac{ f(\{3\} \cup \{1\} - f(\{3\}) + f(\{3\} \cup \{2\}) - f(\{3\}) }{ f(\{3\} \cup \{1, 2\}) - f(\{3\}) } = \frac{-1/2 + 1/2 - 1/2 + 1/2}{0 + 1/2} = 0,
        \end{equation}
        meaning that f has submodularity ratio $0$ or $E_\infty$ is not even approximately supermodular.
    \end{example}
    
    Although the previous discussion denies from problem (\ref{eq:linf_problem}) a greedy solution with any guarantees, we propose next a practical alternative to get a sparse enough vector.
    % Whilst problem (\ref{eq:1.31}), when we modify the error constraint to be $ \| \mathbf{b} - \mathbf{A} \boxplus \mathbf{x} \|_\infty \leq \epsilon $, does not admit an approximately optimal solution with the greedy algorithm (the error function becomes non-supermodular), we can obtain a sparse vector, which minimizes the $l_\infty$ error for a fixed support set, by finding a solution to problem (\ref{eq:1.31}) and then adding to it half of its $l_\infty$ error.
    % Here we exploit the above idea. 
    We first obtain a sparse vector $\mathbf{x}_{p, \epsilon}$ by solving problem~\eqref{eq:1.31}. Then, we add to the vector element-wise half of its $\ell_\infty$ error $\| \mathbf{b} - \mathbf{A} \boxplus \mathbf{x}_{p, \epsilon} \|_\infty/2$. Interestingly, this new solution minimizes the $\ell_{\infty}$ error among all vectors with the same support, as formalized in the following result.
     \begin{proposition}\label{prop:SMMAE}
        Let $\mathbf{x}_{SMMAE} \in \mathbb{R}^n_\text{max}$ be defined as:
        \begin{equation}
            \mathbf{x}_{SMMAE} = \mathbf{x}_{p, \epsilon} + \frac{\| \mathbf{b} - \mathbf{A} \boxplus \mathbf{x}_{p, \epsilon} \|_\infty}{2},
        \end{equation}
        where $\mathbf{x}_{p, \epsilon}$ is a solution of problem (\ref{eq:1.31}) with fixed $(p, \epsilon)$. Then $ \forall \; \mathbf{z} \in \mathbb{R}^n_\text{max} $ with $ \text{supp}(\mathbf{z}) = \text{supp}(\mathbf{x}_{p, \epsilon}) $, it holds
        \begin{equation}
            \| \mathbf{b} - \mathbf{A} \boxplus \mathbf{z} \|_\infty \geq \| \mathbf{b} - \mathbf{A} \boxplus \mathbf{x}_{SMMAE}  \|_\infty = \frac{\| \mathbf{b} - \mathbf{A} \boxplus \mathbf{x}_{p, \epsilon} \|_\infty}{2} 
        \end{equation}
        and, also,
        \begin{equation}\label{eq:12}
            \| \mathbf{b} - \mathbf{A} \boxplus \mathbf{x}_{SMMAE} \|_\infty \leq \frac{\sqrt[p]{\epsilon}}{2}.
        \end{equation}
    \end{proposition}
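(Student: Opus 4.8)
The plan is to recognize this proposition as essentially a corollary of Lemma~\ref{lem:2}, combined with an elementary comparison between the $\ell_\infty$ and $\ell_p$ norms. First I would set $T = \text{supp}(\mathbf{x}_{p,\epsilon})$ and note that, because $\mathbf{x}_{p,\epsilon}$ is produced as a solution of problem~\eqref{eq:1.31} (via Algorithm~\ref{alg:greedy}, or after the reduction of Proposition~\ref{prop:2}), its finite entries coincide with those of the principal solution $\hat{\mathbf{x}}$. Hence $\mathbf{x}_{p,\epsilon} = \mathbf{x}|_T$ in the notation of Lemma~\ref{lem:2}, and therefore $\mathbf{x}_{SMMAE} = \mathbf{x}|_T + \tfrac{1}{2}\|\mathbf{b} - \mathbf{A}\boxplus\mathbf{x}|_T\|_\infty$ is exactly the vector $\mathbf{x}^*$ appearing in that lemma. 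The first displayed chain, namely $\|\mathbf{b} - \mathbf{A}\boxplus\mathbf{z}\|_\infty \ge \|\mathbf{b} - \mathbf{A}\boxplus\mathbf{x}_{SMMAE}\|_\infty = \tfrac{1}{2}\|\mathbf{b} - \mathbf{A}\boxplus\mathbf{x}_{p,\epsilon}\|_\infty$ for every $\mathbf{z}$ with $\text{supp}(\mathbf{z}) = T$, is then precisely the conclusion of Lemma~\ref{lem:2} specialized to this $T$, so no new argument is needed.

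It remains to establish the bound~\eqref{eq:12}. Using the equality just obtained, it suffices to show that $\|\mathbf{b} - \mathbf{A}\boxplus\mathbf{x}_{p,\epsilon}\|_\infty \le \sqrt[p]{\epsilon}$. For this I would invoke the standard norm monotonicity $\|\mathbf{v}\|_\infty \le \|\mathbf{v}\|_p$, valid for every $\mathbf{v}\in\mathbb{R}^m$ and $p \ge 1$, applied to the residual $\mathbf{v} = \mathbf{b} - \mathbf{A}\boxplus\mathbf{x}_{p,\epsilon}$. Combining this with the feasibility of $\mathbf{x}_{p,\epsilon}$ for problem~\eqref{eq:1.31}, i.e.\ $\|\mathbf{b} - \mathbf{A}\boxplus\mathbf{x}_{p,\epsilon}\|_p^p \le \epsilon$, gives $\|\mathbf{b} - \mathbf{A}\boxplus\mathbf{x}_{p,\epsilon}\|_\infty \le \|\mathbf{b} - \mathbf{A}\boxplus\mathbf{x}_{p,\epsilon}\|_p \le \epsilon^{1/p}$; halving this inequality and using the equality from the first part yields~\eqref{eq:12}.

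Since both parts reduce to results already in hand, I do not anticipate a genuine obstacle. The only point requiring care is the justification that $\mathbf{x}_{p,\epsilon}$ may be taken with finite entries equal to $\hat{\mathbf{x}}$, so that Lemma~\ref{lem:2} applies verbatim and the defined $\mathbf{x}_{SMMAE}$ agrees with $\mathbf{x}^*$; this is exactly what Proposition~\ref{prop:2} guarantees, since it shows any optimal support of~\eqref{eq:1.31} can be realized with these values without affecting feasibility.
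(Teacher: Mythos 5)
Your proposal is correct and follows essentially the same route as the paper: the first chain is obtained by identifying $\mathbf{x}_{p,\epsilon}$ with $\mathbf{x}|_T$ (its finite entries being those of $\hat{\mathbf{x}}$) and invoking Lemma~\ref{lem:2}, and the bound~\eqref{eq:12} comes from combining the feasibility constraint $\|\mathbf{b}-\mathbf{A}\boxplus\mathbf{x}_{p,\epsilon}\|_p^p\leq\epsilon$ with the comparison $\|\cdot\|_\infty\leq\|\cdot\|_p$, which the paper simply writes out elementwise as $(\bigvee_i v_i)^p\leq\sum_i v_i^p$ for the nonnegative residual. The only cosmetic difference is that the paper's elementwise form of this inequality also covers $0<p<1$ (used in one of its experiments), whereas you state it for $p\geq 1$.
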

    
    \begin{proof}
        Observe that $\mathbf{x}_{p, \epsilon}$ is equal to the principal solution $\hat{\mathbf{x}}$ inside $\text{supp}(\mathbf{x}_{p, \epsilon})$. So the first inequality holds from Lemma \ref{lem:2}. Regarding the second one, we have: 
        \begin{equation}\label{eq:smmaeproof1}
            \| \mathbf{b} - \mathbf{A \boxplus x}_{SMMAE} \|_\infty = \frac{\| \mathbf{b} - \mathbf{A \boxplus x}_{p, \epsilon} \|_\infty}{2} = \frac{\bigvee_i(b_i - [\mathbf{A \boxplus x}_{p, \epsilon}]_i)}{2}.
        \end{equation}
        But, notice that:
        \begin{equation}\label{eq:smmaeproof2}
            (\bigvee_i b_i - [\mathbf{A \boxplus x}_{p, \epsilon}]_i)^p = \bigvee_i(b_i - [\mathbf{A \boxplus x}_{p, \epsilon}]_i)^p \leq \sum_i(b_i - [\mathbf{A \boxplus x}_{p, \epsilon}]_i)^p \leq \epsilon,
        \end{equation}
        so
        \begin{equation}\label{eq:smmaeproof3}
            \bigvee_i(b_i - [\mathbf{A \boxplus x}_{p, \epsilon}]_i) \leq \sqrt[p]{\epsilon}
        \end{equation}
        and the result follows from (\ref{eq:smmaeproof1}). Note that the bound tightens, as $p$ increases.
    \end{proof}
    
    % \begin{proof}
    % (Sketch) \par
    %     If we fix the support set of the considered vectors, equivalently we can omit the columns and indices of $\mathbf{A}$ and $\mathbf{x}$, respectively, that do not belong in the support set (since they will not be considered at the evaluation of the maximum). Then the minimum of the $l_\infty$ error is obtained from the principal solution of the new equation plus the half of its $l_\infty$ error. But then observe that the new principal solution shares the same values with the original principal solution (follows from Lemma \ref{lem:1}), which are precisely the values of $\mathbf{x}^*$ inside the support set. Extending the vector back to $\mathbb{R}^n_\text{max}$ yields the result. The second $l_\infty$ inequality holds from standard norm properties and the bound tightens as $p$ increases.
    % \end{proof}
    % %
    The above method provides sparse vectors that are approximate solutions of the equation with respect to the $\ell_\infty$ norm without the need of the lateness constraint. After computing $\mathbf{x}_{p, \epsilon}$, $\mathbf{x}_{SMMAE}$ requires $ \mathcal{O}(m|\text{supp}(\mathbf{x}_{p, \epsilon})| + |\text{supp}(\mathbf{x}_{p, \epsilon})|) = \mathcal{O}((m+1)|\text{supp}(\mathbf{x}_{p, \epsilon})|) $ time. We call $\mathbf{x}_{\text{SMMAE}}$ \textit{Sparse Minimum Max Absolute Error (SMMAE)} estimate of $\mathbf{b}$. %For a comparison of the proposed method with the greedy algorithm \ref{alg:greedy} for randomly generated matrices $\mathbf{A, b}$, see Appendix \ref{ap:A}. 
    
    Next, we provide an experiment on randomly generated data to assess the effectiveness of the proposed method in solving the $\ell_\infty$ problem (\ref{eq:linf_problem}).
    \begin{example}
        Input data consist of $100$ random pair of matrices $ \mathbf{A} \in \mathbb{R}^{1000 \times 1000}, \mathbf{b} \in \mathbb{R}^{1000 \times 1} $ where each value of $\mathbf{A}$ is sampled from a normal distribution $\mathcal{N}(0, 2^2)$ and each value of $\mathbf{b}$ from a standard one $\mathcal{N}(0, 1)$. \par
        We organise the experiment as follows: for each pair of $\mathbf{A, b}$, we solve, first, problem (\ref{eq:1.31}) with $p=150$ and $\epsilon=(2 \cdot 2.5)^{150}$ to acquire a sparse vector that is an approximate solution with respect to the $\ell_{150}$ norm and then add to it half of its $\ell_\infty$ error, obtaining this way a sparse vector that has $\ell_\infty$ error less than $2.5$ (see Proposition \ref{prop:SMMAE}). We choose a high order norm so that $\ell_\infty$ is close to its theoretical bound $2.5$. Afterwards, we solve directly the $\ell_\infty$ problem (\ref{eq:linf_problem}) with greedy Algorithm \ref{alg:greedy} (with a change of error functions i.e. $E_p$ becomes $E_\infty$) for $\epsilon = 2.5$ and compare the cardinalities of the support set of the solutions produced from the two methods. \par
        The heuristic method has a median cardinality of $30$ as opposed to $33$ for the greedy approach, which verifies the soundness of the proposed method. Although the benefit seems small, Fig. \ref{fig:comp} reveals that the greedy approximation can have unnecessary large support set (observe the spikes on the Greedy graph) and the difference between the two methods can be arbitrarily big.

        \begin{figure}
            \centering
            \includegraphics[width=0.75\textwidth, height=0.4\textheight]{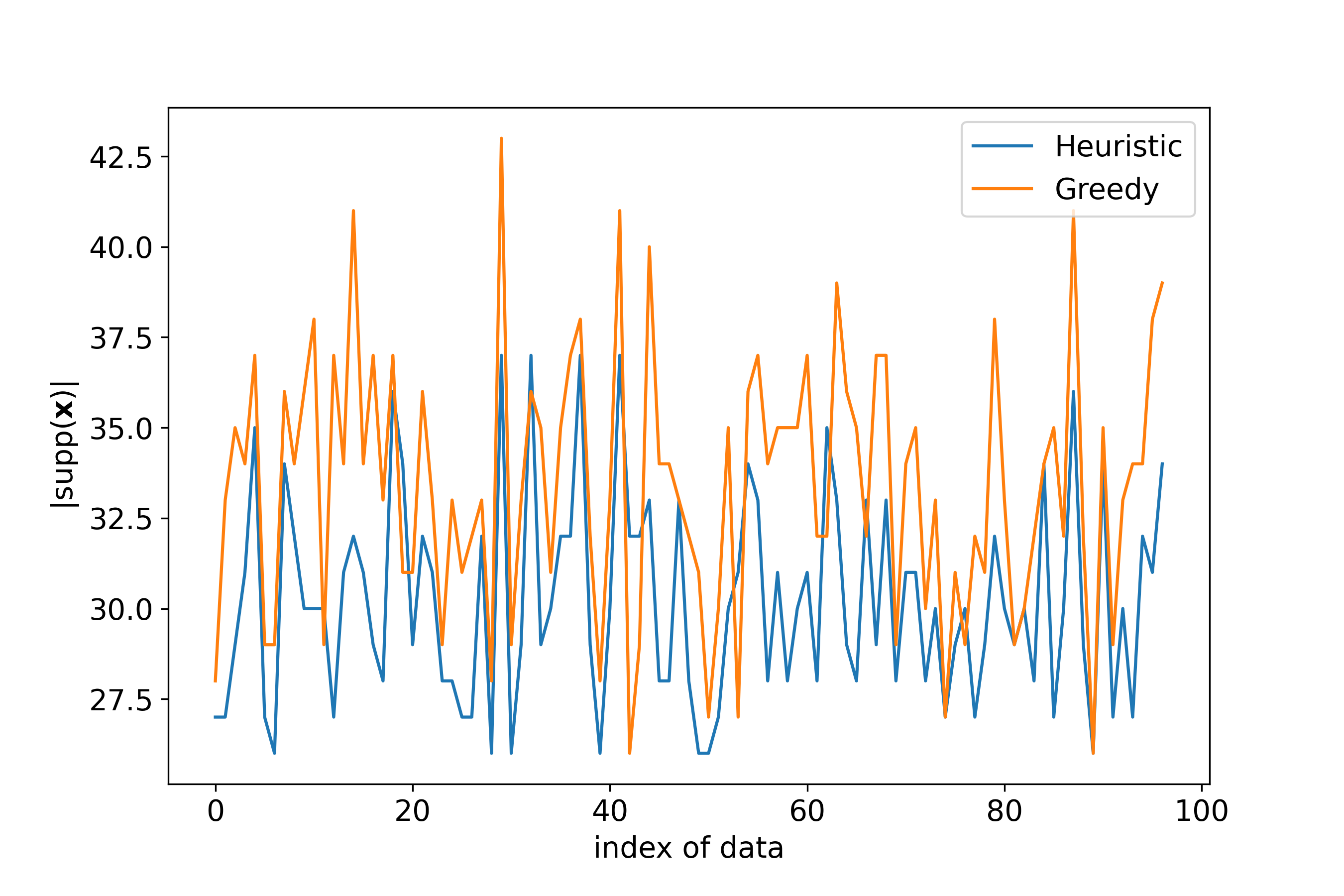}
            \caption{The cardinality of the support set obtained from a greedy solution of (\ref{eq:linf_problem}) and the heuristic approach proposed in Proposition \ref{prop:SMMAE}. Shown for $100$ different pairs of input data $\mathbf{A, b}$. Best viewed in color.}
            \label{fig:comp}
        \end{figure}
    \end{example}

\section{Applications in convex regression}
    In this section, we are interested in approximating a convex function by a piecewise-linear one. We call this the \textit{Tropical Regression problem}. It is well known that any convex function can be expressed as the pointwise supremum of a, potentially infinite, family of affine hyperplanes, using the Legendre-Fenchel conjugate (a.k.a. slope transform)~\cite{Rock70, BoVa04, HeMa97}. Our goal is to  approximate the convex function with as few hyperplanes as possible. We show next how the sparse framework we introduced addresses this problem. \par
    Let $ (\mathbf{x}_i, f_i) \in \mathbb{R}^{n+1}, i = 1, \ldots, m, $ be a set of (possibly noisy) data sampled from a convex function $f$ and $\{\mathbf{a}_k\}_{k=1}^{K}$ be a set of slope vectors; for example, this could be some integer multiples of a slope step inside a fixed $n$-dimensional interval or the numerical gradients of the data. Given the data and the slopes, our goal is to compute a PWL (piecewise-linear) function $p$:
    \begin{equation}\label{eq:10}
        p(\mathbf{x}) = \bigvee_{k = 1}^K \mathbf{a}^{\intercal}_k\mathbf{x} + b_k,
    \end{equation}
    that satisfies $ f_i = p(x_i) + \text{error}, \forall i $. Ideally, this regression problem can be formulated as the following max-plus matrix equation:
    \begin{equation}\label{eq:11}
        \underbrace{\begin{pmatrix}
            \mathbf{a}^\intercal_1 \mathbf{x}_1 & \mathbf{a}^\intercal_2 \mathbf{x}_1 & \ldots & \mathbf{a}^\intercal_K \mathbf{x}_1 \\
            . & . & . & . \\
            . & . & . & . \\
            \mathbf{a}^\intercal_1 \mathbf{x}_m & \mathbf{a}^\intercal_2 \mathbf{x}_m & \ldots & \mathbf{a}^\intercal_K \mathbf{x}_m
        \end{pmatrix}}_{\mathbf{A}} \boxplus 
        \underbrace{\begin{pmatrix}
            b_1 \\ b_2 \\ . \\ . \\ b_K
        \end{pmatrix}}_{\mathbf{x}} = 
        \underbrace{\begin{pmatrix}
            f_1 \\ . \\ . \\ f_m
        \end{pmatrix}}_{\mathbf{b}}
    \end{equation}
    Observe that by taking $b_k=-\infty$, the hyperplane $\mathbf{a}^{\intercal}_k\mathbf{x} + b_k$ is neglected in the maximum. Hence, sparsity leads to using less affine regions.
    We can formulate problem (\ref{eq:1.31}) for the above matrices for any desired $(\epsilon, p)$. If a solution exists, then it produces intercepts $b_k$ that ensure that the $\ell_p $ approximation error is less than $\epsilon$ and, at the same time, the resulting tropical polynomial contains the approximately minimum number of affine regions needed to approximate $f$. Except for the previous SGLEs, we are also able to get the SMMAE estimates of $f$ by adding to the result half of its $\ell_\infty$ error, as explained in section \ref{ssec:SMMAE}. Coming with $\ell_\infty$ guarantees, those estimates are useful especially when the approximation is being used as a surrogate of the original function in an optimization problem, as the difference between the $2$ minima can be bounded. \par
    First, we calculate matrix $\mathbf{A}$ in $\mathcal{O}(Knm)$. Solving, now, problem (\ref{eq:1.31}) for equation (\ref{eq:11}) requires the computation of its principal solution in $\mathcal{O}(Km)$ time and then employing the greedy algorithm to find the intercepts $b_k$ with complexity $\mathcal{O}(K^2)$, meaning a total complexity of $\mathcal{O}(K^2 + K(n+1)m)$. Computing the SMMAE estimate, as well, requires an extra $\mathcal{O}(Km)$. Next, we demonstrate the effectiveness of our method via numerical examples. \par
    
    \begin{example}\label{ex:1}
        Consider $100$ pairs of noiseless data $(x_i, y_i)$, where $ x_i $ are evenly spaced numbers sampled from the interval $ [-2, 2] $ and $ y_i = f(x_i) $, where f is the convex function:
        \begin{equation}
            f(x) = \max(-6x-6, \frac{x}{2}, \frac{x^5}{5} + \frac{x}{2}).
        \end{equation}
        We wish to fit the following max-plus tropical polynomial curve, where we fix the candidate slopes to be the set of all $ k \in [-20, 20] $, with a step size of $0.125$:
        \begin{equation}
            p(x) = \bigvee_{k = -20}^{20} kx + b_k,
        \end{equation}
        so the corresponding equations become:
        \begin{equation}\label{eq:ex1matrix}
            \begin{pmatrix}
                -20 x_1 & -19.875 x_1 & 19.75 x_1 & \ldots & 20 x_1 \\
                . & . & . & . & . \\
                . & . & . & . & . \\
                -20 x_{100} & -19.875 x_{100} & -19.75 x_{100} & \ldots & 20 x_{100}
            \end{pmatrix} \boxplus 
            \begin{pmatrix}
                b_{-20} \\ b_{-19.875} \\ b_{-19.75} \\ . \\ . \\ b_{20}
            \end{pmatrix} = 
            \begin{pmatrix}
                f_1 \\ . \\ . \\ f_{100}
            \end{pmatrix}
        \end{equation}
    We solve problem (\ref{eq:1.31}) for the above matrices and for a variety of different pairs of error threshold and norm order to obtain sparse greatest lower estimates (SGLE) and then add to these solutions the half of their $\ell_\infty$ error in order to get the corresponding sparse minimum max absolute error (SMMAE) estimates. In order to provide a clarifying comparison between solutions obtained with different $p$ norms, for each experiment we set the error threshold $\epsilon$ to be $\theta^p$, where $\theta$ is varied. We present the resulting SGLEs and SMMAEs in Tables \ref{tab:1}, \ref{tab:2} and \ref{tab:3}, \ref{tab:4}, respectively. Notice that the SMMAE estimates have exactly half the $\ell_\infty$ error of the respective SGLEs, as expected by Proposition \ref{prop:SMMAE}. Also, observe in Tables \ref{tab:2} and \ref{tab:4} the effect of increasing the norm order $p$ to the resulting support set (it is increased as suggested by Proposition \ref{prop:ratio}).  See Figure \ref{fig:1} for the best PWL approximations of $f$.
    
    \begin{table}
        \centering
        \begin{tabular}{|l|ccc||ccc|}
            \cline{2-7}
            \multicolumn{1}{}{}
            &
            \multicolumn{3}{|c||}{$p = 1$}
            &
            \multicolumn{3}{c|}{$p = 2$}\\
            \hline
            $\theta$ & erro${r}_{RMS}$ & erro$\text{r}_\infty$ & $|$supp$|$ & erro${r}_{RMS}$ & erro$\text{r}_\infty$ & $|$supp$|$\\
            \hline
            $0.15$ & $0.0038$ & $0.0226$ & $15$ & $0.0131$ & $0.0532$ & $10$\\
            $0.25$ & $0.0057$ & $0.0376$ & $13$ & $0.0230$ & $0.0932$ & $7$\\
            $0.5$ & $0.0120$ & $0.0697$ & $11$ & $0.0436$ & $0.2354$ & $6$\\
            $1$ & $0.0202$ & $0.1071$ & $8$ & $0.0628$ & $0.2354$ & $5$\\
            $2$ & $0.0491$ & $0.2794$ & $6$ & $0.1525$ & $1.0099$ & $4$\\
            $3$ & $0.0615$ & $0.2794$ & $5$ & $0.2521$ & $1.0099$ & $3$\\
            $4$ & $0.0615$ & $0.2794$ & $5$ & $0.2521$ & $1.0099$ & $3$\\
            $10$ & $0.1628$ & $1.0824$ & $4$ & $0.2521$ & $1.0099$ & $3$\\
            $15$ & $0.2529$ & $1.0824$ & $3$ & $1.4335$ & $6.4000$ & $2$\\
            $30$ & $0.2529$ & $1.0824$ & $3$ & $2.5800$ & $7.0000$ & $1$\\
            \hline
        \end{tabular}
        \caption{$\ell_1$ and $\ell_2$ SGLEs obtained from solving problem (\ref{eq:1.31}) for equation (\ref{eq:ex1matrix}), with $ p = 1, 2$, respectively, and error threshold $\epsilon^p$.  We report the \textbf{R}oot \textbf{M}ean \textbf{S}quared and Maximum Absolute errors, along with the cardinality of the support set of the solution (the number of affine regions of the resulting tropical polynomial).}
        \label{tab:1}
    \end{table}
    
    \begin{table}
        \centering
        \begin{tabular}{|l|ccc||ccc|}
            \cline{2-7}
            \multicolumn{1}{}{}
            &
            \multicolumn{3}{|c||}{$p = 5$}
            &
            \multicolumn{3}{c|}{$p = 150$}\\
            \hline
            $\theta$ & erro${r}_{RMS}$ & erro$\text{r}_\infty$ & $|$supp$|$ & erro${r}_{RMS}$ & erro$\text{r}_\infty$ & $|$supp$|$\\
            \hline
            $0.15$ & $0.0228$ & $0.0932$ & $7$ & $0.0458$ & $0.1313$ & $18$\\
            $0.25$ & $0.0228$ & $0.0932$ & $7$ & $0.0647$ & $0.2322$ & $16$\\
            $0.5$ & $0.0648$ & $0.2497$ & $5$ & $0.1699$ & $0.3867$ & $13$\\
            $1$ & $0.1430$ & $0.9392$ & $4$ & $0.2735$ & $0.8685$ & $10$\\
            $2$ & $0.2530$ & $0.9392$ & $3$ & $0.6084$ & $1.8232$ & $7$\\
            $3$ & $0.2530$ & $0.9392$ & $3$ & $0.9615$ & $2.8788$ & $5$\\
            $4$ & $0.2530$ & $0.9392$ & $3$ & $1.1120$ & $3.6444$ & $4$\\
            $10$ & $1.4335$ & $6.4000$ & $2$ & $2.6230$ & $6.8636$ & $1$\\
            $15$ & $2.5800$ & $7.0000$ & $1$ & $2.6230$ & $6.8636$ & $1$\\
            $30$ & $2.5800$ & $7.0000$ & $1$ & $2.6230$ & $6.8636$ & $1$\\
            \hline
        \end{tabular}
        \caption{$\ell_5$ and $\ell_{150}$ SGLEs for a number of different error thresholds. Same metrics reported, as in Table \ref{tab:1}.}
        \label{tab:2}
    \end{table}
    
    \begin{table}
        \centering
        \begin{tabular}{|l|ccc||ccc|}
            \cline{2-7}
            \multicolumn{1}{}{}
            &
            \multicolumn{3}{|c||}{$p = 1$}
            &
            \multicolumn{3}{c|}{$p = 2$}\\
            \hline
            $\theta$ & erro${r}_{RMS}$ & erro$\text{r}_\infty$ & $|$supp$|$ & erro${r}_{RMS}$ & erro$\text{r}_\infty$ & $|$supp$|$\\
            \hline
            $0.15$ & $0.0105$ & $0.0113$ & $15$ & $0.0243$ & $0.0266$ & $10$\\
            $0.25$ & $0.0176$ & $0.0189$ & $13$ & $0.0415$ & $0.0466$ & $7$\\
            $0.5$ & $0.0328$ & $0.0349$ & $11$ & $0.1080$ & $0.1177$ & $6$\\
            $1$ & $0.0486$ & $0.0535$ & $8$ & $0.1053$ & $0.1177$ & $5$\\
            $2$ & $0.1297$ & $0.1398$ & $6$ & $0.4733$ & $0.5049$ & $4$\\
            $3$ & $0.1252$ & $0.1397$ & $5$ & $0.4552$ & $0.5049$ & $3$\\
            $4$ & $0.1252$ & $0.1398$ & $5$ & $0.4552$ & $0.5049$ & $3$\\
            $10$ & $0.5096$ & $0.5412$ & $4$ & $0.4552$ & $0.5049$ & $3$\\
            $15$ & $0.4879$ & $0.5412$ & $3$ & $2.9508$ & $3.2000$ & $2$\\
            $30$ & $0.4879$ & $0.5412$ & $3$ & $2.8645$ & $3.5000$ & $1$\\
            \hline
        \end{tabular}
        \caption{$\ell_1$ and $\ell_2$ SMMAE estimates for a number of different error thresholds. Same metrics reported, as in Table \ref{tab:1}.}
        \label{tab:3}
    \end{table}
    
    \begin{table}
        \centering
        \begin{tabular}{|l|ccc||ccc|}
            \cline{2-7}
            \multicolumn{1}{}{}
            &
            \multicolumn{3}{|c||}{$p = 5$}
            &
            \multicolumn{3}{c|}{$p = 150$}\\
            \hline
            $\theta$ & erro${r}_{RMS}$ & erro$\text{r}_\infty$ & $|$supp$|$ & erro${r}_{RMS}$ & erro$\text{r}_\infty$ & $|$supp$|$\\
            \hline
            $0.15$ & $0.0414$ & $0.0466$ & $7$ & $0.0545$ & $0.657$ & $18$\\
            $0.25$ & $0.0414$ & $0.0466$ & $7$ & $0.0945$ & $0.1161$ & $16$\\
            $0.5$ & $0.1119$ & $0.1248$ & $5$ & $0.1265$ & $0.1933$ & $13$\\
            $1$ & $0.4385$ & $0.4696$ & $4$ & $0.3093$ & $0.4342$ & $10$\\
            $2$ & $0.4245$ & $0.4696$ & $3$ & $0.7243$ & $0.9116$ & $7$\\
            $3$ & $0.4245$ & $0.4696$ & $3$ & $1.1728$ & $1.4394$ & $5$\\
            $4$ & $0.4245$ & $0.4696$ & $3$ & $1.4588$ & $1.8222$ & $4$\\
            $10$ & $2.9508$ & $3.2000$ & $2$ & $2.7175$ & $3.4318$ & $1$\\
            $15$ & $2.8645$ & $3.5000$ & $1$ & $2.7175$ & $3.4318$ & $1$\\
            $30$ & $2.8645$ & $3.5000$ & $1$ & $2.7175$ & $3.4318$ & $1$\\
            \hline
        \end{tabular}
        \caption{$l_5$ and $l_{150}$ SMMAE estimates for a number of different error thresholds. Same metrics reported, as in Table \ref{tab:1}.}
        \label{tab:4}
    \end{table}
    
    \begin{figure}
        \centering
        \begin{subfigure}{0.45\textwidth}
            \includegraphics[width=\textwidth]{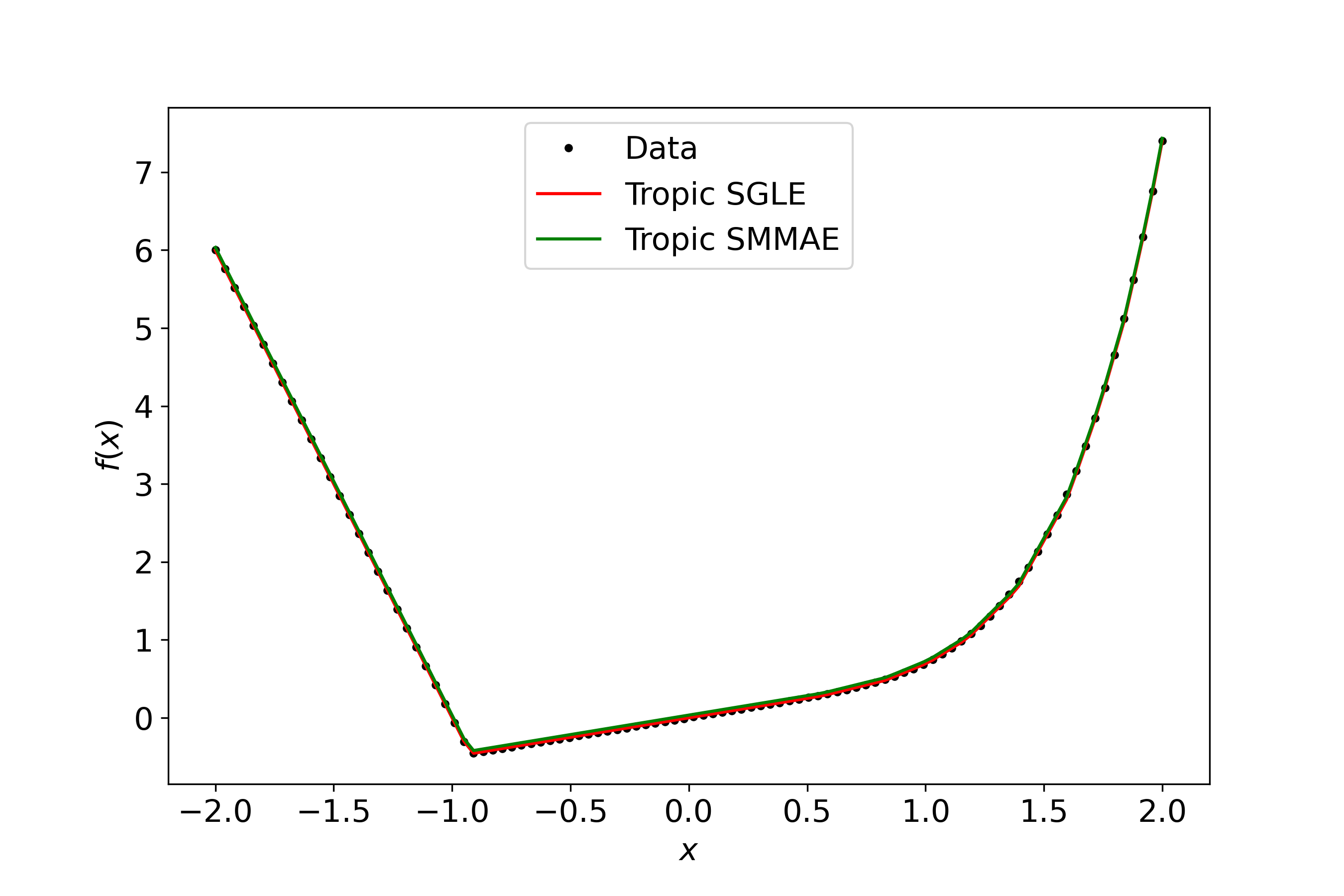}
            \caption{$K = 11, \epsilon = 0.5, p = 1$}
        \end{subfigure}
        \begin{subfigure}{0.45\textwidth}
            \includegraphics[width=\textwidth]{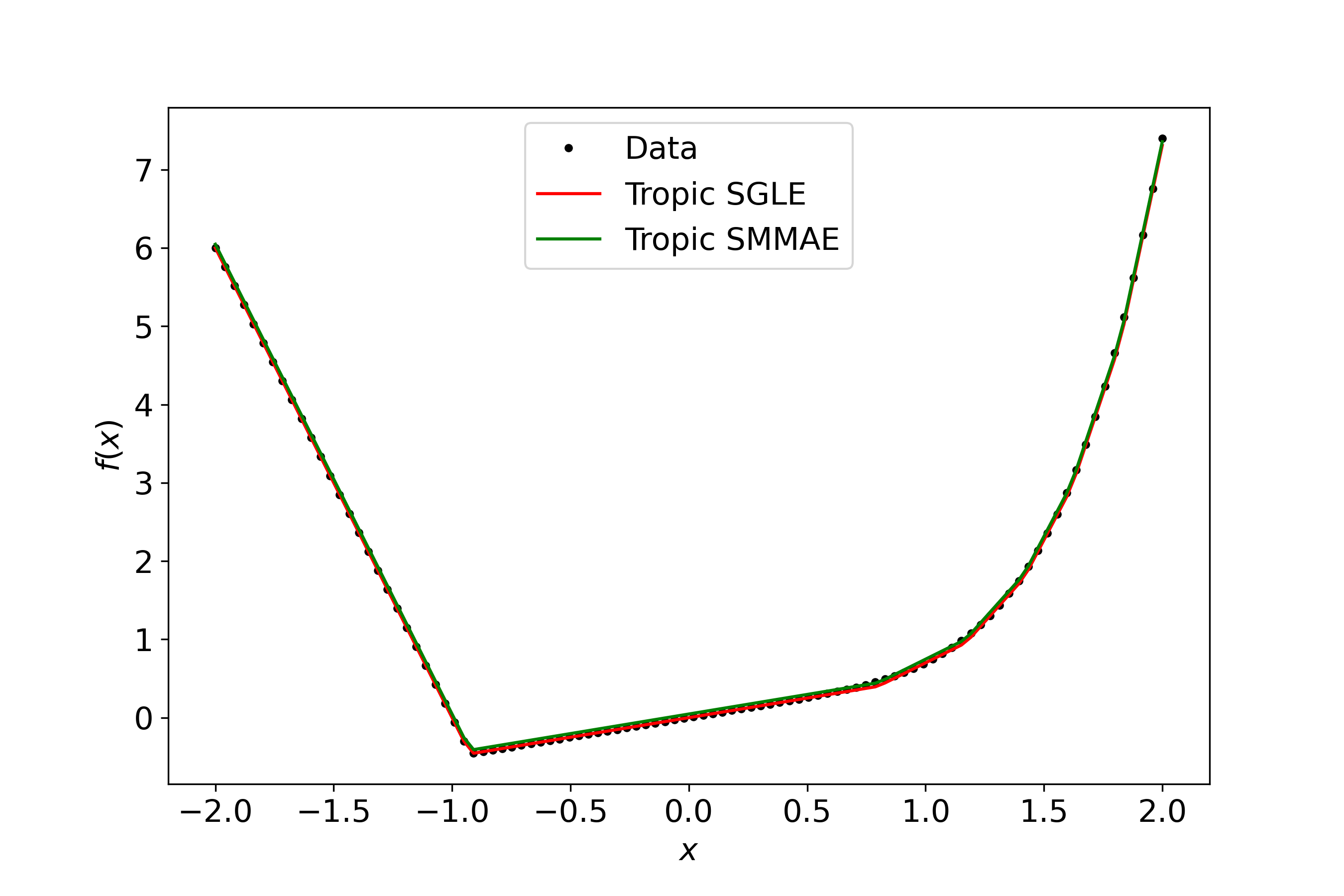}
            \caption{$K = 6, \epsilon = 0.0625, p = 2$}
        \end{subfigure}
        \begin{subfigure}{0.45\textwidth}
            \includegraphics[width=\textwidth]{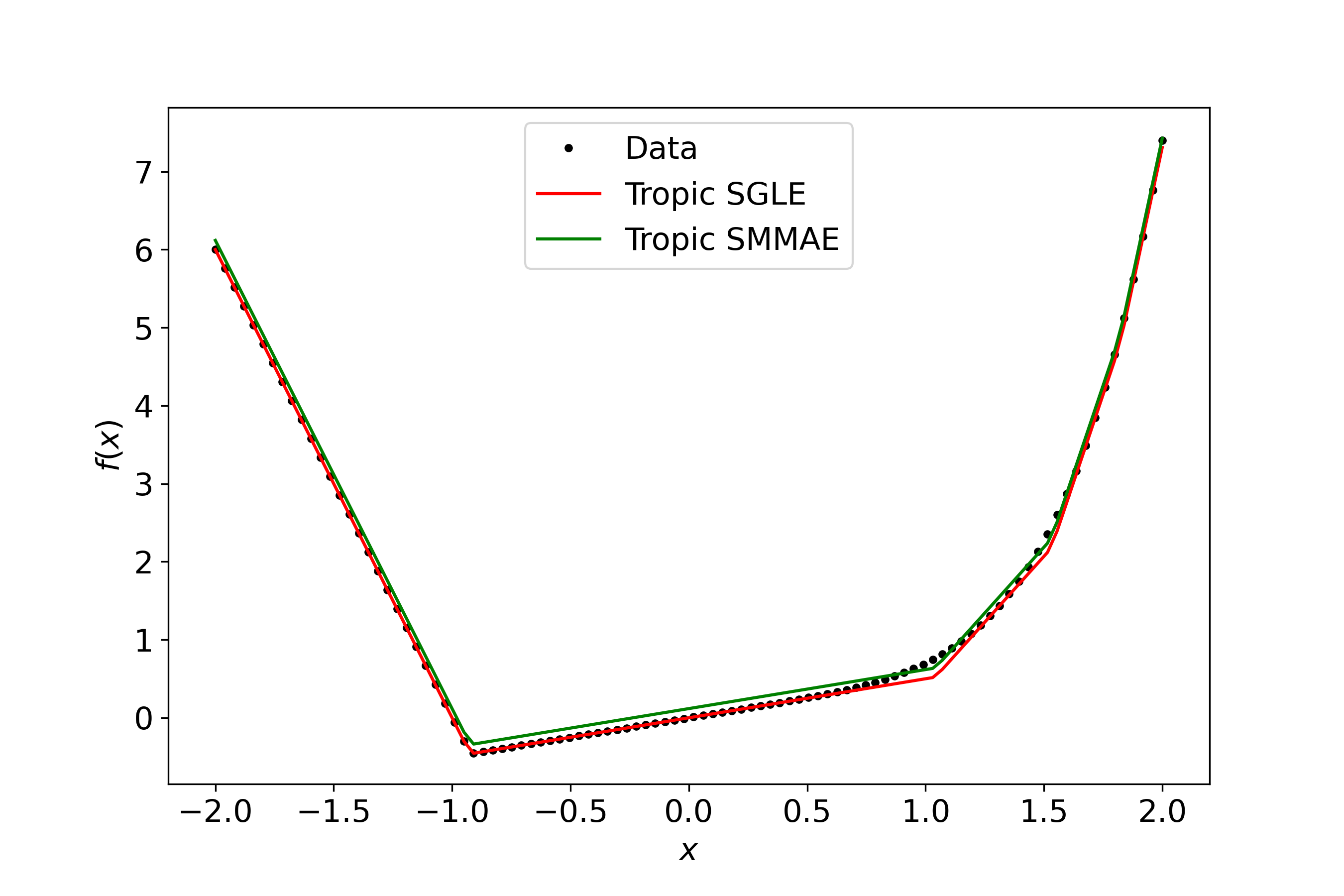}
            \caption{$K = 5, \epsilon = 1, p = 2$}
        \end{subfigure}
        \begin{subfigure}{0.45\textwidth}
            \includegraphics[width=\textwidth]{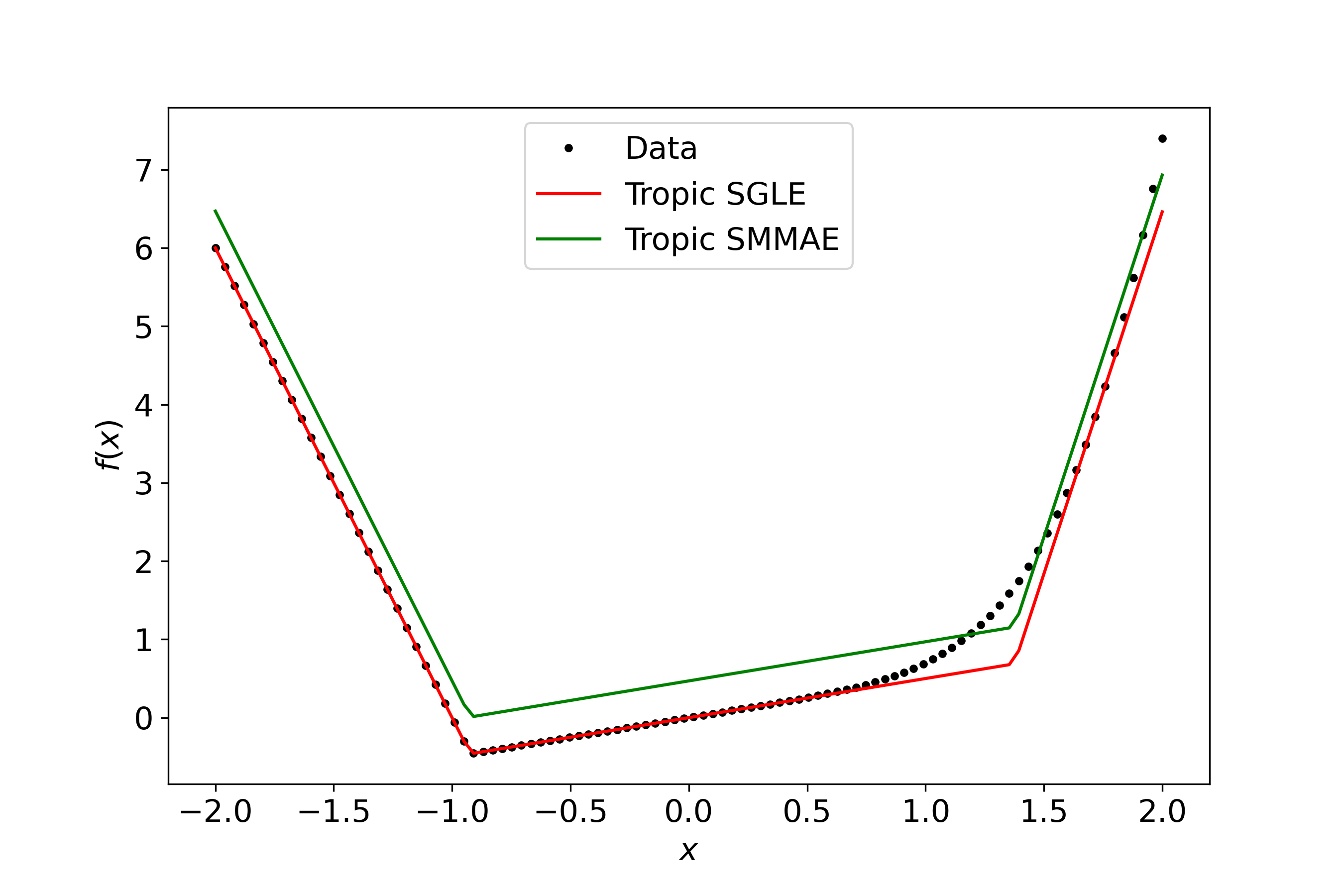}
            \caption{$K = 3, \epsilon = 1024, p = 5$}
        \end{subfigure}
        \caption{Piecewise linear approximations of $f(x) = \max(-6x-6, \frac{x}{2}, \frac{x^5}{5} + \frac{x}{2})$ with $K$ regions, resulting from the sparse tropical regression method with varied error threshold $\epsilon$ and norm order $p$. Best viewed in color.}
        \label{fig:1}
    \end{figure}
    \end{example}
    
    \begin{example}
    \begin{figure}
        \centering
        \begin{subfigure}{\textwidth}
            \includegraphics[width=0.85\textwidth, height=0.4\textheight]{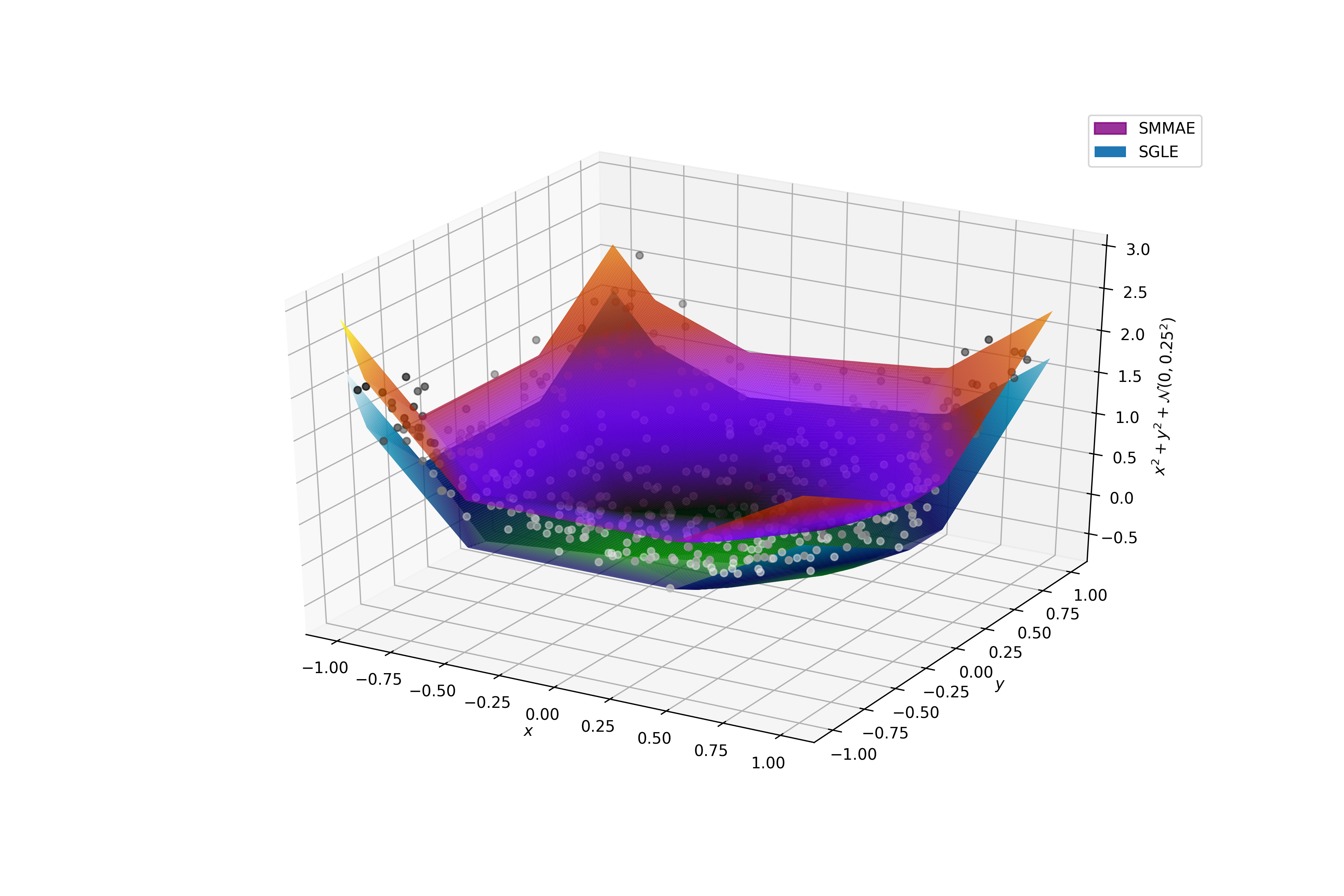}
            \caption{$K = 16, \epsilon = 10^8, p = 150$}
        \end{subfigure}
        \begin{subfigure}{\textwidth}
            \includegraphics[width=0.85\textwidth, height=0.4\textheight]{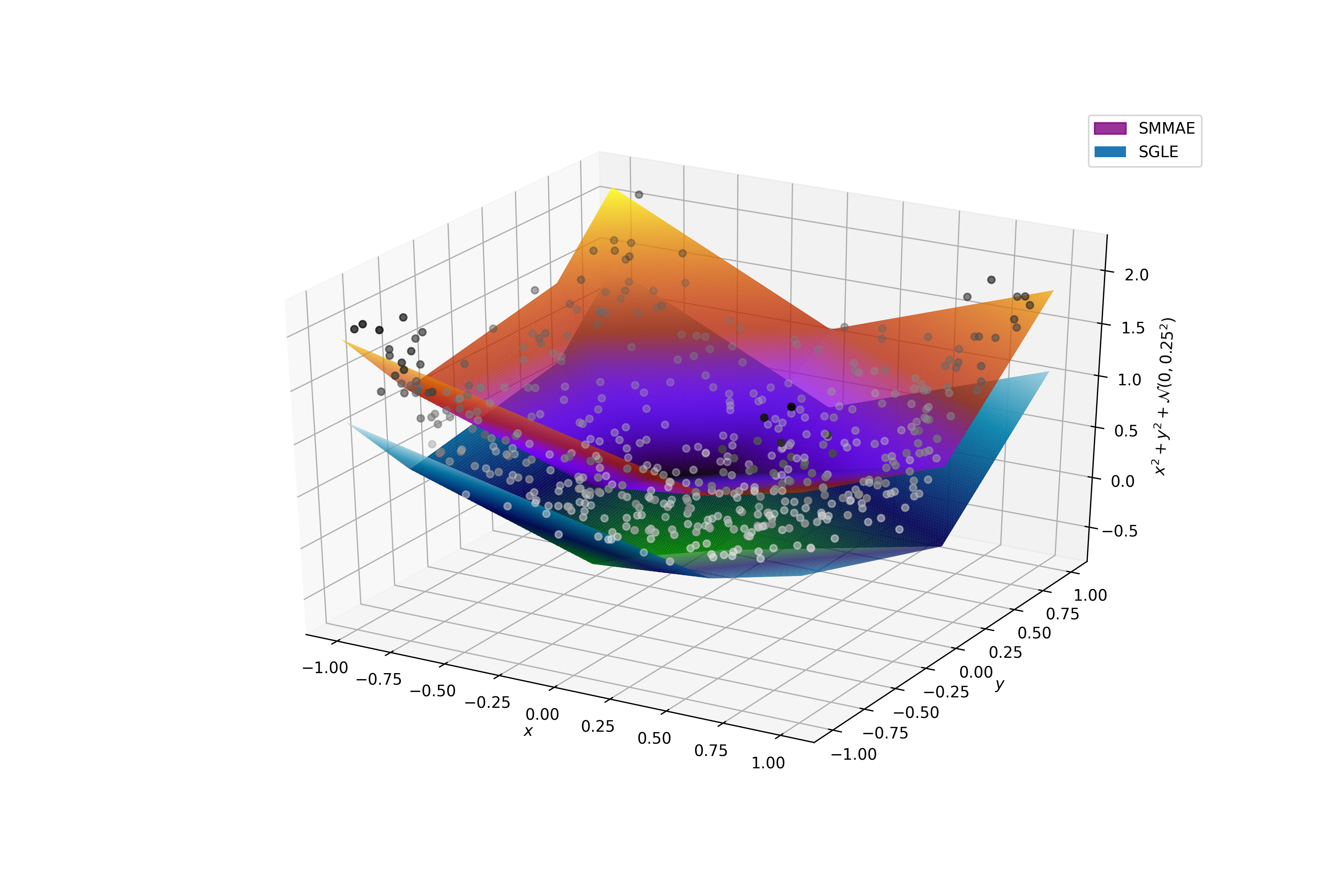}
            \caption{$K = 5, \epsilon = 220, p = 2$}
        \end{subfigure}
        \caption{The sparse greatest lower and minimum max absolute error estimates of surface $ z = x^2 + y^2 + \mathcal{N}(0, 0.25^2) $ for $2$ different runs of the fitting algorithm. Best viewed in color.}
        \label{fig:2}
    \end{figure}
    
    Let us now focus on the $2$-dimensional case, meaning we obtain data from a convex surface. For this example, we sample values from the noisy paraboloid surface:
    \begin{equation}\label{eq:18}
        z = x^2 + y^2 + \mathcal{N}(0, 0.25^2),
    \end{equation}
    where $x_i, y_i$ are drawn as i.i.d. random variables from the $ \text{Unif}[-1, 1] $ distribution. We obtain $500$ observations from the surface. \par
    Let $ A = \{-10.00, -9.75, -9.50, \ldots, 9.50, 9.75, 10\} $ be the set of the partial derivatives of the affine regions that are to be considered, then our tropical model for this example is
    \begin{equation}
        p(x, y) = \bigvee_{(k, l) \in A \times A}b_{kl} + kx + ly
    \end{equation}
    We obtain SGLEs and SMMAE estimators for the above model, with different runs of our algorithm, as in Example \ref{ex:1}. We present the results in Table \ref{tab:5}, compared to those obtained from the tropical regression method of \cite{MaTh20}, in which the number of affine regions is a pre-defined constant. Fig. \ref{fig:troVSstro} shows the RMS error of the SMMAE estimators as a function of the number $K$ of affine regions and compares it with the MMAE estimators reported in \cite{MaTh20}. \par
    We verify that, in the presence of noise, the SMMAE estimators perform better than the SGLEs, as the latter must approximate the data from below (See Fig. \ref{fig:2}) and, therefore, underestimate noise-corrupted low values. Both the estimators are able to find good approximations with a relatively low number of affine regions and the results are superior to those reported in \cite{MaTh20} (in terms of error and number of affine regions).  Notice that the SMMAE estimates have exactly half the $\ell_\infty$ error of their SGLEs counterparts, as expected by Proposition \ref{prop:SMMAE}. Moreover, observe that when $ p = 150 $, the SMMAE estimate has $\ell_\infty$ error equal to $0.5634$, which is very close to the theoretical upper bound from equation (\ref{eq:12}) ($\frac{10^{8/150}}{2} = 0.5653)$. This observation allows one to run targeted versions of the fitting algorithm (namely, choose a high order norm $p$ and set $ \epsilon = (2\delta)^p $, where $\delta$ is the accepted $\ell_\infty$ error threshold).
    
    \begin{figure}
        \centering
        \includegraphics[width=0.6\textwidth, height=0.4\textheight]{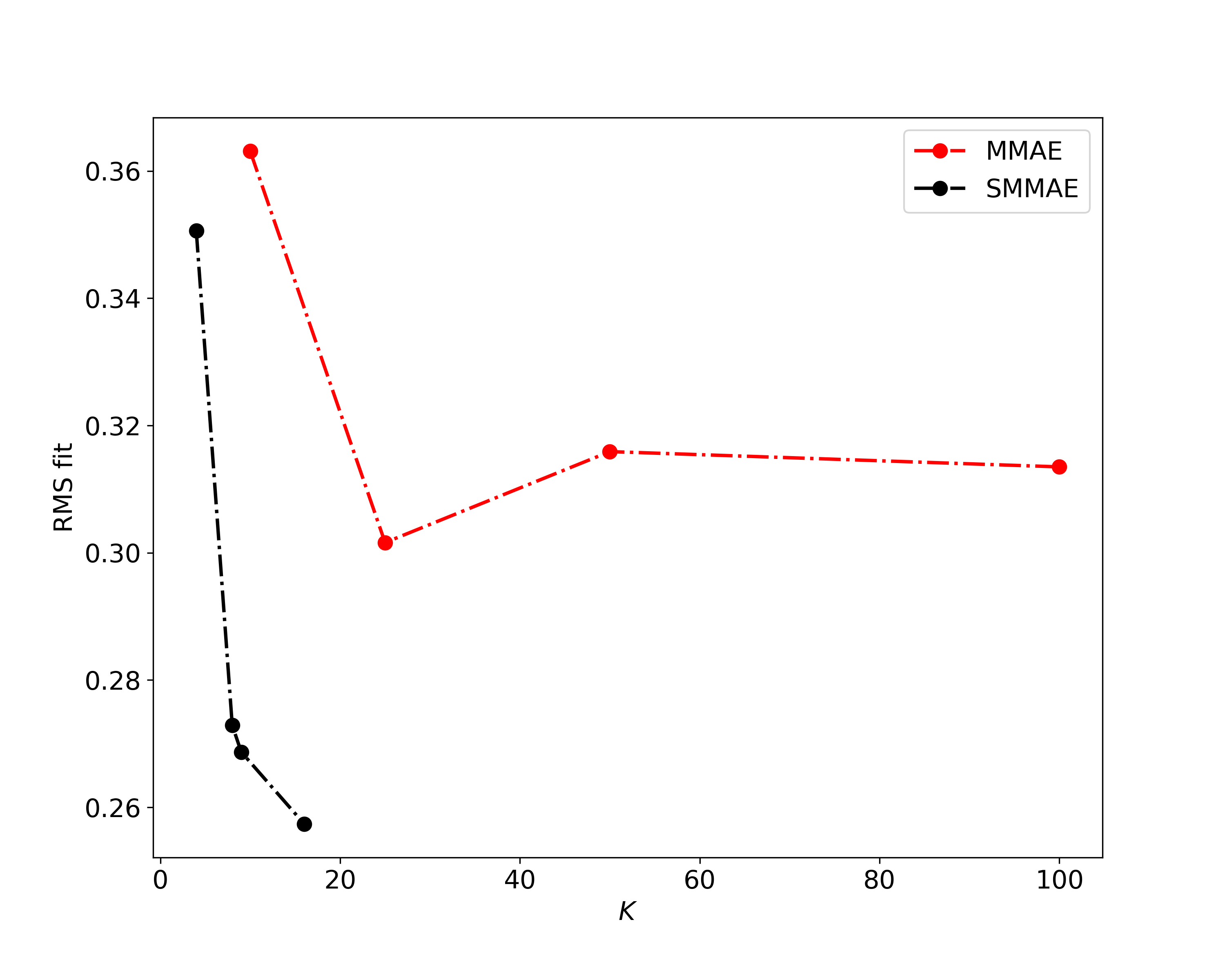}
        \caption{RMS error of SMMAE estimators vs number of affine regions $K$. Comparison between our method and the tropical regression method (MMAE) reported in \cite{MaTh20}.}
        \label{fig:troVSstro}
    \end{figure}
    
    \begin{table}
        \centering
        \begin{tabular}{|l||cc|cc|c|}
            \cline{2-5}
            \multicolumn{1}{}{}
            &
            \multicolumn{2}{|c|}{SGLE}
            &
            \multicolumn{2}{c|}{SMMAE}
            &
            \multicolumn{1}{}{}\\
            \hline
            $(\epsilon, p)$ & erro$\text{r}_{RMS}$ & erro$\text{r}_\infty$ & erro$\text{r}_{RMS}$ & erro$\text{r}_\infty$ & $|$supp$|$\\
            \hline
            $(210, 1)$ & $0.4926$ & $1.1575$ & $0.3027$ & $0.5787$ & $28$\\
            $(250, 1)$ & $0.5518$ & $1.1967$ & $0.2847$ & $0.5983$ & $8$\\
            $(300, 1)$ & $0.6681$ & $1.5405$ & $0.3506$ & $0.7703$ & $4$\\
            $(120, 2)$ & $\mathbf{0.4899}$ & $\mathbf{1.1268}$ & $0.2942$ & $\mathbf{0.5634}$ & $31$\\
            $(130, 2)$ & $0.5096$ & $1.1575$ & $0.2889$ & $0.5787$ & $16$\\
            $(150, 2)$ & $0.5465$ & $1.1734$ & $0.2729$ & $0.5867$ & $8$\\
            $(220, 2)$ & $0.6344$ & $1.5405$ & $0.3479$ & $0.7703$ & $5$\\
            $(360, 0.3)$ & $0.5050$ & $1.1390$ & $0.2956$ & $0.5695$ & $20$\\
            $(50, 5)$ & $0.5018$ & $\mathbf{1.1268}$ & $0.2812$ & $\mathbf{0.5634}$ & $23$\\
            $(75, 7)$ & $0.5602$ & $1.1963$ & $0.2687$ & $0.5981$ & $9$\\
            $(10^8, 150)$ & $0.5560$ & $\mathbf{1.1268}$ & $\mathbf{0.2574}$ & $\mathbf{0.5634}$ & $16$\\
            \hline
        \end{tabular}
        \begin{tabular}{|l||cc|cc|}
            \cline{2-5}
            \multicolumn{1}{}{}
            & 
            \multicolumn{2}{|c|}{GLE \cite{MaTh20}}
            &
            \multicolumn{2}{c|}{MMAE \cite{MaTh20}}\\
            \hline
            K & erro$\text{r}_{RMS}$ & erro$\text{r}_\infty$ & erro$\text{r}_{RMS}$ & erro$\text{r}_\infty$\\
            \hline
            $10$ & $0.6659$ & $1.6022$ & $0.3641$ & $0.8011$\\
            $25$ & $0.5674$ & $1.2779$ & $0.3016$ & $0.6389$\\
            $50$ & $0.5489$ & $1.3068$ & $0.3159$ & $0.6534$\\
            $100$ & $0.5364$ & $1.2828$ & $0.3135$ & $0.6414$\\
            \hline
        \end{tabular}
        \caption{PWL approximations and their errors of surface (\ref{eq:18}). $K$ is the number of affine regions in the resulting tropical polynomial.}
        \label{tab:5}
    \end{table}
        
    \end{example}
    
    \begin{example}\label{ex:3}
        Consider the case where dimension is $ n = 3 $ and we have $ m = 11^3 = 1331 $ points collected from the set $ V \times V \times V $, where $ V = \{-5, -4, \ldots, 4, 5\} $. The convex function to approximate is:
        \begin{equation}
            g(\mathbf{x}) = \log(\exp(x_1) + \exp(x_2) + \exp(x_3)).
        \end{equation}
        The above synthetic dataset was used before in the PWL fitting literature in \cite{MaBo09}. The authors propose an iterated method, which alternates between partitioning the data into affine regions and carrying out least squares fits to update the local coefficients. As the resulting approximation depends on the initial partition, the authors propose running multiple instances of their algorithm to obtain a good PWL fit to $g$. \par
        % We instead calculate the gradients of the data, we set those as the potential slopes of the affine regions and then apply our tropical sparse method, to select some of the regions and determine their constant terms. 
        Note that when the dimension of the problem grows more than $n = 3$ or $n = 4$, it becomes infeasible to divide large $n$-dimensional intervals, $[-l, l]^n$, with a float step size, as $K$ becomes equal to $(\frac{2l+1}{step})^n$. Similar to \cite{MaTh20}, we propose instead finding the numerical gradients of the data, setting them as the candidate slopes $\mathbf{a}_k$ and then applying our tropical sparse method, to select some of the regions and determine their constant terms. By changing that, the method becomes tractable and grows as $\mathcal{O}(m^2)$. For this example, we fix $ p = 2 $ and to obtain the first approximation, we set $ \epsilon = 1331 $, so that the RMS error is less than $1$. The resulting tropical polynomial has $ K = 4 $ affine regions. From then on, we gradually lower $\epsilon$, so that we get approximations with varied $K$, until $K$ reaches $21$. Fig. \ref{fig:rmsVsK} shows the RMS errors versus the number of affine regions. The results are competitive to those reported in \cite{MaBo09}, while our method produces approximations with a single run, as opposed to \cite{MaBo09} which relies on 10 or 100 different trials, with complexity for each one of $\mathcal{O}((n+1)^2mi)$, $i$ being the number of iterations until convergence.
        
        \begin{figure}
            \centering
            \includegraphics[width=0.6\textwidth, height=0.4\textheight]{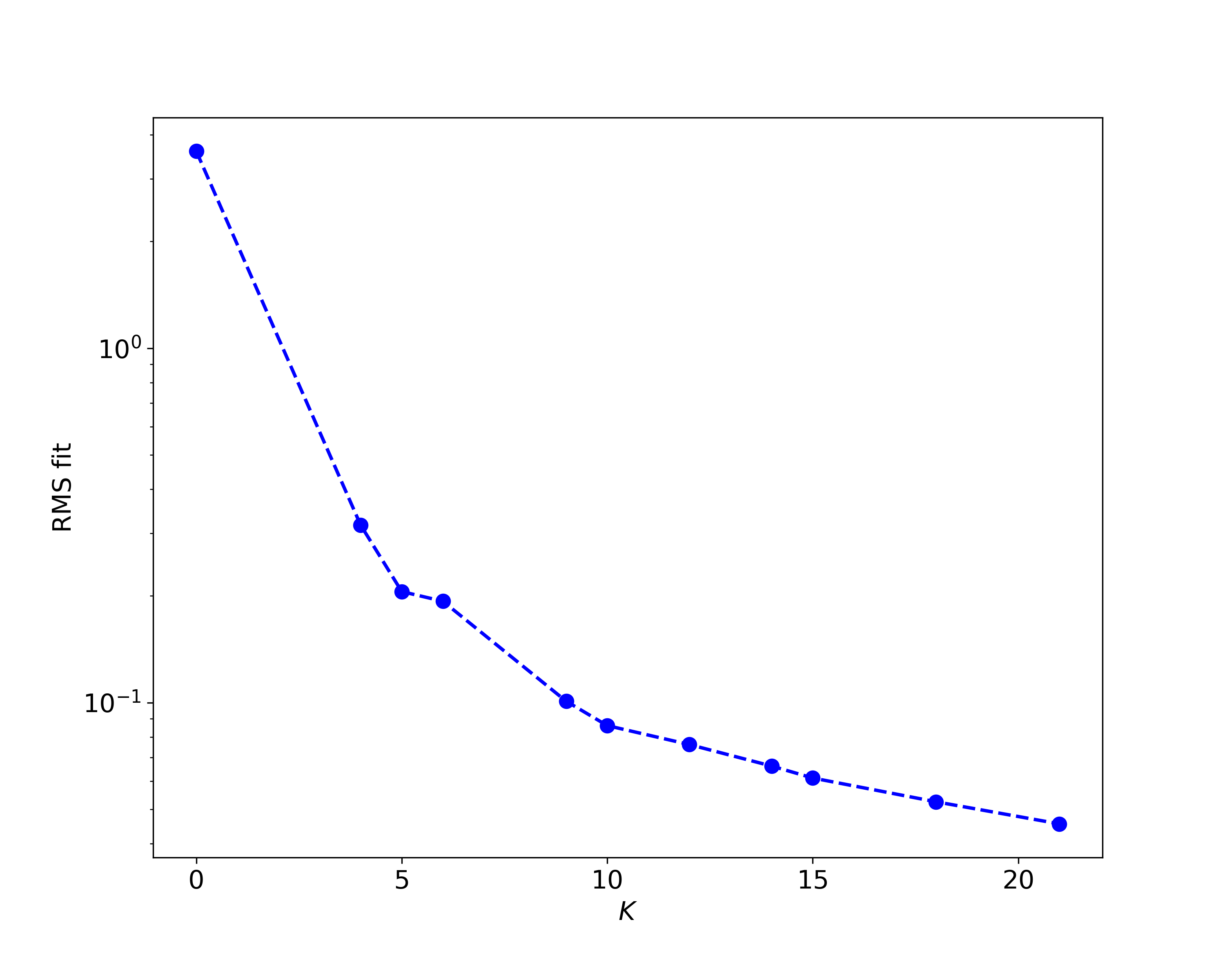}
            \caption{RMS error vs number of affine regions of PWL approximation of $ g(\mathbf{x}) = \log(\exp(x_1) + \exp(x_2) + \exp(x_3)) $.}
            \label{fig:rmsVsK}
        \end{figure}
    \end{example}

\section{Conclusions and Future Work}
Max-plus and tropical algebra serve as a framework for various fields, with emerging applications in optimization and machine learning. In this work, we demonstrated how to obtain sparse approximate solutions to max-plus equations and based on that, introduced a novel method for multivariate convex regression by PWL functions (i.e tropical regression) with a nearly optimal number of affine regions. The proposed method comes with error bounds for the resulting approximation and has an edge over previously reported tropical regression methods, in terms of robustness. In future work, we wish to further study the statistical properties of the tropical estimators, when dealing with noisy data. Lastly, an extension of the sparsity results in nonlinear vector spaces, called Complete Weighted Lattices \cite{Mara17}, would allow one to solve more general problems of regression, using the tools introduced in this work.

\section*{Acknowledgements} The authors wish to thank Manos Theodosis for helpful comments on this work.

\printbibliography

\end{document}